\newtheorem{thm}{THEOREM}%[section]
\newtheorem{cor}[thm]{COROLLARY}
\newtheorem{lem}[thm]{Lemma}
\newtheorem{definition}[thm]{DEFINITION}
\newtheorem{example}[thm]{EXAMPLE}
\newtheorem{proposition}[thm]{PROPOSITION}
\newtheorem{remark}[thm]{REMARK}
\def\beXa{\begin{example}} \def\eeXa{\end{example}}
\def\eeD{\end{definition}} \def\beD{\begin{definition}}
\def\beR{\begin{remark}} \def\eeR{\end{remark}}
\def\beL{\begin{lem}} \def\eeL{\end{lem}}
\def\beP{\begin{proposition}} \def\eeP{\end{proposition}}
\def\beC{\begin{cor}} \def\beT{\begin{thm}}
  \def\eeT{\end{thm}}
\def\eeC{\end{cor}}
\providecommand{\norm}[1]{\left\lVert#1\right\rVert}%//.//
\providecommand{\abs}[1]{\left\lvert#1\right\rvert}
\providecommand{\pr}[1]{\left(#1\right)} %(.)
\providecommand{\pp}[1]{\left[#1\right]} %[.]
\providecommand{\set}[1]{\left\lbrace#1\right\rbrace} %{.}
\providecommand{\scal}[1]{\left\langle#1\right\rangle}%<.>
\providecommand{\keywords}[1]{\textbf{\textit{Keywords:  }} #1}
\title{The stochastic fast logarithmic equation in $\mathbb{R}^{d}$ with
multiplicative Stratonovich noise}
\author[1,2]{Ioana Ciotir}
\author[2]{Reika Fukuizumi}
\author[3,4,*]{Dan Goreac} 
\affil[1]{Normandie University, INSA de Rouen Normandie,
LMI (EA 3226 - FR CNRS 3335), 76000 Rouen, France,\textit{email:ioana.ciotir@insa-rouen.fr}} 
\affil[2]{Research Center for Pure and Applied Mathematics,
Graduate School of Information Sciences,
Tohoku University, Japan,  \textit{email:fukuizumi@math.is.tohoku.ac.jp}}
\affil[3]{School of Mathematics and Statistics, Shandong University, Weihai, Weihai 264209, PR China} 
\affil[4]{LAMA, Univ Gustave Eiffel, UPEM, Univ Paris Est Creteil, CNRS, F-77447 Marne-la-Vallée, France, \textit{email:dan.goreac@univ-eiffel.fr}}
\affil[*]{Corresponding author, email: dan.goreac@univ-eiffel.fr, dan.goreac@u-pem.fr}
\begin{document}
\maketitle

\begin{abstract}
This paper is concerned with the existence and uniqueness of the solution for the stochastic fast logarithmic equation with Stratonovich multiplicative noise in $\mathbb{R}^{d}$ for $d\geqslant 3$.  It provides an answer to a critical case (morally speaking, corresponding to the porous media operator $\Delta X^m $ for $ m=0$) left as an open problem in the paper Barbu-Röckner-Russo \cite{BARBU20151024}.
We face several technical difficulties related both to the degeneracy properties of the logarithm and to the fact that the problem is treated in an unbounded domain.  Firstly,  the order in which the approximations are considered is very important and different from previous methods.  Secondly,  the energy estimates (see eq.  \eqref{S3Eq4}) needed in the last step can only be achieved with a well-chosen Stratonovich-type rectification of the noise.\\

\noindent \keywords{stochastic fast logarithmic equation; SPDE in unbounded domains; maximal monotone operators; Yosida approximation; multiplicative Stratonovich noise.}\\

 \noindent \textbf{\textit{2020 MSC:} 76S05, \and 60H15, \and 35K55.}

\end{abstract}
%\tableofcontents

\section{Introduction}

Let us consider a nonlinear diffusion process of the following form 
\begin{equation}
dX\left( t\right) =\Delta \ln \left( X\left( t\right)\right) dt
\label{proces}
\end{equation}
where $X\left( t,\xi \right) $ is the positive density for the time - space
coordinates $\left( t,\xi \right) .$ This equation describes the process that
has been observed by experiments when using Wisconsin toroidal octupole
plasma containment device (see \cite{ehrhardt}). Kamimura and Dawson
predicted in \cite{Kamimura}\ this time evolution for the cross-field conservative
diffusion of plasma including mirror effects.

The same equation is relevant for the expansion of a thermalized electron cloud
and arises in studies of the central limit approximation to Carleman's
model of the Boltzmann equation (see \cite{carleman} and \cite{Kurtz}) as well. The
asymptotic behaviour of this equation has been studied in \cite{berryman}. \\

Most of
the natural phenomena exhibit some form of variability which cannot be captured by using purely
deterministic approaches. More accurately, natural systems could be
represented as stochastically perturbed models and the deterministic description can be
considered as the subset of the pertinent stochastic models.

The purpose of this paper is to analyse such equations within the framework
of stochastic evolution equations with multiplicative noise, where the dynamics (\ref%
{proces}) is the underlying motivating example. 

Let us now introduce the
suitable framework for this problem. We consider the Stratonovich stochastic differential equation (in some sense to be made precise later on) on the Euclidean space $\mathbb{R}%
^{d}$, for some $d\geq 3$, of the form 
\begin{equation}
\left\{ 
\begin{array}{ll}
dX_{t}-\Delta \ln X_{t}dt=X_t\circ dW_{t}, & \quad (t,\xi) \in \left[ 0,T\right]
\times \mathbb{R}^{d}, \\ 
X_{0}=x.& %
\end{array}
\right.  \label{equ}
\end{equation}
The unknown $X_t=X(t,\xi)$ is a real-valued random field on 
a standard complete, right-continuous probability basis $
\left\{ \Omega ,\mathcal{F},\left\{ \mathcal{F}_{t}\right \} _{t},\mathbb{P}
\right\} $, and $W_{t}$ is related to a $\mathcal{H}^{-1}$-valued 
$Q$ Wiener process associated with the filtration 
$\{\mathcal{F}_t\}_{t\ge0}$, where $Q$ is a non negative trace class operator on $\mathcal{H}^{-1}$. For more details  and the properties of the space 
$\mathcal{H}^{-1}$ see the next section and  Section 6.2 of \cite{BDPR_2016}.

The stochastic theory of nonlinear equations has been, recently, intensively
studied for drift coefficients of the form $-\Delta \Psi $, where $\Psi :\mathbb{\
R\rightarrow R}$ defined by $\Psi \left( r\right) =r^{m}$ is a maximal
monotone operator with additive and multiplicative noise.

In the case $m>1,$ the corresponding equation describes the slow diffusions
(dynamics of fluids in porous media) and their existence, uniqueness and
positivity and behavior of the solution have already been studied in \cite%
{positivity}, \cite{strong}, \cite{BG2}, \cite{strong2}, \cite{RRW_2007} for the
stochastic case. For the deterministic case see \cite{ARON} and \cite%
{VASQUEZ}.

The case $m\in \left( 0,1\right) $ is relevant in the mathematical modeling
of dynamics of an ideal gas in a porous media. For the self organized
criticality case see \cite{CCGS}, \cite{criticality}, \cite{BG1}. Finite
time extinction is studied in 3 dimensions for $m\in \left[ \frac{1}{5}
,1\right) $ in \cite{BDPR3}. See also \cite{BDPR12}.

 The stochastic counterpart was studied, for $m \in (-1,0)$ and multiplicative Itô noise structure in \cite{eusuperfast}.

For the case $m\leq -1,$ it has been proved that, even in the deterministic
case, there is no solution with finite mass (see \cite{VASQUEZ}).

The case $\Psi \left( r\right) =\log r$ was studied for a multiplicative
noise in a bounded domain in \cite{LN}. Note that for positive solutions, it
can be seen as morally corresponding to the situations $m=0$,  in   $ \mathit{div}
~(r^{m-1}\nabla r)=~\frac{1}{m} \Delta \left(  r^m \right) $ since $\mathit{div}
~(r^{-1}\nabla r)=~\Delta \left( \ln r\right) .$

Concerning the stochastic porous media equation in an unbounded domain, the
only known result which is known is in \cite{BARBU20151024} and treats the case of slow
diffusion with a multiplicative It\^{o} noise.

Our work which treats the fast logarithmic diffusion in an unbounded domain has several technical difficulties 
which will be treated by using several specific approximation. 

More precisely, the first main set of difficulties comes from the properties of the logarithm. We have a problem due to the fact that zero does not belong to $D(\ln)$ and we can not assume that $D(\ln)=\textbf{R}$. Another problem which is specific to the logarithm diffusion is the fact that we can not assume any polynomial growth hypothesis, nor the strong monotonicity assumption. All those technical difficulties impose the choice of a particular form of the first approximation in $\lambda$ and the use of a Stratonovich multiplicative noise. 

The second main set of difficulties comes from the unboundedness of the domain which implies, among other problems, the impossibility to use the Poincaré inequality. This technical problem impose the use of a second approximation in $\nu$. 

Finally a third approximation in $\epsilon$ is necessary to get some estimates in appropriated spaces. 

One needs to notice also that the order of passing to the limit for the three approximations is rigorously chosen. 
More precisely one needs to pass to the limit in $\nu$ before $\lambda$ in order to avoid that $\Psi_\lambda (0)$ converges to $\infty$. 
This is an important technical difficulty with respect to the case of a slow porous media diffusion.

The organization of the paper is the following. After an introduction, in the second section we have some notations and the technical setting of the problem. The third section is concerned with the definition of the solution and the existence and uniqueness result. The fourth section gives the proof of the main results in several steps corresponding to the approximations presented above. Finally we have an appendix with some technical points.

For reader's convenience we shall recall some basic notions and settings in the second section.

\section{Notations and setting}
\subsection{Functional Spaces}
Throughout the paper we are going to adopt the following notations.
\begin{enumerate}
\item the underlying space will be a $d\geq 3$-dimensional Euclidean space $\mathbb{R}^d$;
\item the fundamental functional space is $\mathbb{L}^2\pr{\mathbb{R}^d}=\mathbb{L}^2\pr{\mathbb{R}^d;\mathbb{R}}$ of real-valued Lebesgue-square integrable functions. Its norm is $\norm{\cdot}_{\mathbb{L}^2\pr{\mathbb{R}^d}}$;
\item in general, for $1<p<\infty$, we let $\mathbb{L}^p\pr{\mathbb{R}^d;\mathbb{R}^{d'}}$ stand for the space of $\mathbb{R}^{d'}$-valued Lebesgue-$p$-power integrable functions. Its norm is denoted by $\norm{\cdot}_{\mathbb{L}^p\pr{\mathbb{R}^d;\mathbb{R}^{d'}}}$.  Whenever $d'=1$, we designate the space by $\mathbb{L}^p\pr{\mathbb{R}^d}$ the corresponding space of real-valued functions;
\item we will often drop completely the dependency on the underlying Euclidean spaces e.g.  $\mathbb{R}^d$;
\item the norms for $\mathbb{L}^p$-spaces will be sometimes shortened to $p$ i.e. $\norm{\cdot}_p$. This will also apply to the duality product $\scal{\cdot,\cdot}_{p,q}$ when $\frac{1}{p}+\frac{1}{q}=1$ and the Hilbert product $ \scal{\cdot,\cdot}_{2}=\scal{\cdot,\cdot}_{2,2}$;
\item The space $H^1\pr{\mathbb{R}^d}=H^1_0\pr{\mathbb{R}^d}$ is the inhomogeneous Sobolev space on $\mathbb{R}^d$ (functions belonging, together with their first-order partial derivatives, to $\mathbb{L}^2\pr{\mathbb{R}^d}$).  Its norm is \[\norm{u}_{H^1\pr{\mathbb{R}^d}}:=\norm{\pr{u,\nabla u}}_{\mathbb{L}^2\pr{\mathbb{R}^d;\mathbb{R}^{1+d}}}=\pr{\int_{\mathbb{R}^d}\pr{u^2(\xi)+\abs{\nabla u(\xi)}^2}d\xi}^{\frac{1}{2}}.\]
\item the dual space of $H^1$ (with pivot space $\mathbb{L}^2\pr{\mathbb{R}^d}$) is denoted by $H^{-1}\pr{\mathbb{R}^d}$ (or simply $H^{-1}$);
\item the space $\mathcal{H}^s\pr{\mathbb{R}^d}$ for $s\in\mathbb{R}$ is the homogeneous Sobolev space of (real-valued) tempered distributions $u$ over $\mathbb{R}^d$ having an $\mathbb{L}^1_{loc}\pr{\mathbb{R}^d}$ Fourier distribution $\hat{u}$ and such that\[\norm{u}_s^2=\norm{u}_{\mathcal{H}^s\pr{\mathbb{R}^d}}^2:=\int_{\mathbb{R}^d}\abs{\xi}^{2s}\abs{\hat{u}(\xi)}^2d\xi<\infty.\]

\item For any $s \in \mathbb{R}$, $C_{\mathbb{P}}([0,T]; \mathcal{H}^s)$ denotes the space of all $\mathcal{H}^s$-valued  $(\mathcal{F}_t)_t$ progressively measurable processes $X: \Omega \times  [0,T] \to \mathcal{H}^s$ such that 
$$ \mathbb{E} \int_0^T \|X(t)\|^2_{\mathcal{H}^s} dt < +\infty,$$
and for all compacts $\mathcal{O}$,  the realization of $X$ on 
$\mathcal{O}$ has a continuous modification in \\
$C([0,T]; L^2(\Omega, \mathcal{H}^s(\mathcal{O})))$.   
\end{enumerate}
\begin{remark}\label{RemIncl}
\begin{enumerate}
\item The space $H^1$ is associated with the space $\mathcal{F}_e$ defined on \cite[page 129]{RRW_2007} and associated to the operator $L=\bigtriangleup$ with the corresponding domain in $\mathbb{L}^2\pr{\mathbb{R}^d}$.  The accompanying quadratic form $\mathcal{E}(u,v):=\scal{\sqrt{-L}u,\sqrt{-L}v}_{\mathbb{L}^2\pr{\mathbb{R}^d}}$ renders an extended transient Dirichlet space structure such that one has \begin{align}\label{Gelfand}
V:=\mathbb{L}^2\pr{\mathbb{R}^d}\subset H^{-1}\subset V^*\textnormal{ continuously and densely.}
\end{align}
(cf.  \cite[page 131]{RRW_2007}, see also the explicit example  \cite[page 129, paragraph preceding Eq.  (3.1)]{RRW_2007}.)
\item The space $\mathcal{H}^s\pr{\mathbb{R}^d}$ is a Hilbert space provided $s<\frac{d}{2}$(cf.  \cite[Prop. 1.34]{BCD_2011}).
\item Provided that $\abs{s}<\frac{d}{2}$, the spaces $\mathcal{H}^s\pr{\mathbb{R}^d}$ and $\mathcal{H}^{-s}\pr{\mathbb{R}^d}$ are dual (cf. \cite[Prop. 1.36]{BCD_2011}). 
\begin{align}\label{DualHsH-s}
\scal{u,v}_{\pr{\mathcal{H}^{s}\pr{\mathbb{R}^d},\mathcal{H}^{-s}\pr{\mathbb{R}^d}}}=\int_{\mathbb{R}^d}u(\xi)v(\xi)d\xi,\ \forall \pr{u,v}\in\mathcal{H}^{s}\pr{\mathbb{R}^d}\times\mathcal{H}^{-s}\pr{\mathbb{R}^d}.
\end{align}
\item The following embeddings hold true (for $0\leq s<\frac{d}{2}$, cf. \cite[Theorem 1.38, Corollary 1.39]{BCD_2011}).
\begin{align}
\label{EmbedCalH}
\mathcal{H}^s\pr{\mathbb{R}^d}\subset \mathbb{L}^{\frac{2d}{d-2s}}\pr{\mathbb{R}^d},\ \mathbb{L}^{\frac{2d}{d+2s}}\pr{\mathbb{R}^d}\subset\mathcal{H}^{-s}\pr{\mathbb{R}^d}\textnormal{ continuously.}
\end{align}
Embedding constants will be denoted by $C_{\mathcal{H}^s\subset\mathbb{L}^{\frac{2d}{d-2s}}}$.  They are assumed to be at least $1$ such that \[\norm{u}_{\mathbb{L}^{\frac{2d}{d-2s}}\pr{\mathbb{R}^d}}\leq C_{\mathcal{H}^s\subset\mathbb{L}^{\frac{2d}{d-2s}}}\norm{u}_{\mathcal{H}^s\pr{\mathbb{R}^d}}.\]An explicit expression of such constants can be found in \cite[Theorem 1.38]{BCD_2011} (last line of the proof).
\item In the case when $s=1$,  a more convenient expression of $\norm{\cdot}_{\mathcal{H}^{1}\pr{\mathbb{R}^d}}$ is given by \[\norm{u}_{\mathcal{H}^{1}\pr{\mathbb{R}^d}}=\norm{\nabla u}_{\mathbb{L}^2\pr{\mathbb{R}^d;\mathbb{R}^d}}.\]
\end{enumerate}
\end{remark}
\subsection{Basis and Wiener Process}
One recalls that $\mathcal{S}_0\pr{\mathbb{R}^d}$, the space of smooth functions (belonging to the Schwartz space) such that the Fourier transform of which vanishes near origin. It is known that $\mathcal{S}_0 (\mathbb{R}^d)$ is dense in $\mathcal{H}^{-1}$ (see, for example, \cite[Prop. 1.35]{BCD_2011}). As a consequence, one is able to pick an orthonormal basis for $\mathcal{H}^{-1}$ whose elements $e_k\in C^1\pr{\mathbb{R}^d}\cap\mathcal{H}^{-1}$ 
with its first order derivatives in  $\mathbb{L}^d\pr{\mathbb{R}^d}$ (to see the required integrability, just recall the linear growth on such smooth functions).
In particular, for every $k\geq 1$, \[\mu'_k:=\norm{e_k}^2_{\mathbb{L}^\infty\pr{\mathbb{R}^d}}+\norm{\nabla e_k}^2_{\mathbb{L}^d\pr{\mathbb{R}^d;\mathbb{R}^d}}+1<\infty.\]
Next, let us consider a sequence $\pr{\mu_k}_{k\geq 1}\subset\mathbb{R}_+^{\mathbb{N}}$ such that 
\begin{align}\label{Cond_mu}
\sum_{1\leq k}\mu_k\mu_k'=\sum_{1\leq k}\mu_k\pr{\norm{e_k}^2_{\infty}+\norm{\nabla e_k}_{d}^2+1}<\infty.
\end{align}
To this sequence $\mu_k$, we associate an $\mathcal{H}^{-1}$-valued $Q$  Wiener process, i.e., 
$Q$ is a non-negative trace-class operator such that $Qe_k=\mu_ke_k$ on $\mathcal{H}^{-1}$. 
The formal representation yields
$W^Q(t)$, 
\[
W^Q(t)  =
\sum_{k\geq 1}\sqrt{\mu_k}\beta_k(t)e_k,\ t\in\pp{0,T},\]for some sequence of independent real-valued Brownian motions $\{\beta_k\}_{k}$ defined on a common space $\pr{\Omega,\mathcal{F},\mathbb{P}}$. 
For more details see Remark 3.3 from \cite{BARBU20151024} or Section 2.3 from 
\cite{Prevot-Rockner}.

%Moreover, we consider the natural right-continuous filtration $\mathbb{F}=\pr{\mathcal{F}_t}_{t\geq 0}$ %generated by $W$ and completed with the $\mathbb{P}$-null sets. 

\subsubsection{On $H^{-1}\pr{\mathbb{R}^d}$ With Standard and Modified Norms}

\begin{enumerate}

\item The space $H^{-1}\pr{\mathbb{R}^d}$ is usually endowed with the norm\[\norm{u}_{H^{-1}\pr{\mathbb{R}^d}}^2:=\int_{\mathbb{R}^d}\pr{1+\abs{\xi}^2}^{-1}\abs{\hat{u}(\xi)}^2\ d\xi,\] or, again, $\norm{u}_{H^{-1}\pr{\mathbb{R}^d}}^2=\norm{\pr{\mathbb{I}-\bigtriangleup}^{-\frac{1}{2}}u}^2_{\mathbb{L}^2}$.  Equivalently, for $\nu>0$, one can consider \[\norm{u}_{H^{-1}_\nu\pr{\mathbb{R}^d}}^2:=\norm{\pr{\nu\mathbb{I}-\bigtriangleup}^{-\frac{1}{2}}u}^2_{\mathbb{L}^2};\]

\item \textbf{The process $W$ as a Brownian motion on $H^{-1}$ and Itô differentials} 
\begin{itemize}
\item One considers 
$J:Q^{\frac{1}{2}}\pr{\mathcal{H}^{-1}}\rightarrow H^{-1}$ given by $J\pr{Q^{\frac{1}{2}}u}:=\sum_{k\geq 1}\sqrt{\mu_k}\scal{u,e_k}_{\mathcal{H}^{-1}}e_k$.  One easily notes this to be a Hilbert-Schmidt embedding.  \begin{align*}
\sum_{k\geq 1}\norm{J(Q^{\frac{1}{2}}e_k)}_{H^{-1}}^2=\sum_{k\geq 1}\mu_k\norm{e_k}_{H^{-1}}^2
%\leq \sum_{k\geq 1}\mu_k\norm{e_k}_{H^{-1}}^2
\leq\sum_{k\geq 1}\mu_k\norm{e_k}_{\mathcal{H}^{-1}}^2<\infty,
\end{align*}
the last but one inequality being a simple consequence of the continuous embedding $\mathcal{H}^{-1}\subset H^{-1}$ and equality of pair product.

\item The noise coefficient is regarded for given $x \in H^{-1}$ as $\sigma(x):Q^{\frac{1}{2}}\mathcal{H}^{-1}\rightarrow H^{-1}$ defined by $\sigma(x)\pr{Q^{\frac{1}{2}}u}:=\sum_{k\geq 1}\sqrt{\mu_k}\scal{e_k,u}_{\mathcal{H}^{-1}}e_kx$, 
which is well-defined, since, whenever $\phi\in C^\infty_0\pr{\mathbb{R}^d}$ with $\norm{\phi}_{H^1}\leq 1$, a simple computation then yields 
\begin{align*}\norm{e_kx}_{H^{-1}}=\sup\ \set{\scal{x,e_k\phi}_{\pr{H^{-1},H^1}}:\ \norm{\phi}_{H^1}\leq 1}\leq \norm{x}_{H^{-1}}\sup_{\norm{\phi}_{H^1}\leq 1} \norm{e_k\phi}_{H^{1}}.
\end{align*}
Here, 
\begin{align*}
\norm{e_k\phi}_{H^{1}}^2&=\norm{e_k\phi}_{\mathbb{L}^2}^2+\norm{\nabla \pr{e_k\phi}}_{\mathbb{L}^2}^2\leq \pr{\norm{\phi}_{2}^2\norm{e_k}_\infty^2+\norm{\nabla e_k}_{\mathbb{L}^d}^2\norm{\phi}_{\mathbb{L}^{\frac{2d}{d-2}}}^2+\norm{e_k}^2_{\infty}\norm{\nabla\phi}_2^2}\\
&\leq \norm{e_k}^2_{\infty}+\norm{\nabla e_k}_{\mathbb{L}^d}^2\norm{\phi}_{\mathbb{L}^{\frac{2d}{d-2}}}^2\leq \norm{e_k}^2_{\infty}+C_{H^1\subset\mathbb{L}^{\frac{2d}{d-2}}}^{2}\norm{\nabla e_k}_d^2,
\end{align*}(the latter inequality follows from the continuous embeddings $H^1\subset\mathcal{H}^1\subset \mathbb{L}^{\frac{2d}{d-2}}$) which amounts to 
\[\norm{e_kx}_{H^{-1}}^2\leq \norm{x}_{H^{-1}}^2\pr{ \norm{e_k}^2_{\infty}+C^2_{H^1\subset\mathbb{L}^{\frac{2d}{d-2}}}\norm{\nabla e_k}_d^2}.\] 
As a consequence, 
$\sigma(x)\in\mathcal{L}\pr{Q^{\frac{1}{2}}\mathcal{H}^{-1};H^{-1}}$. Moreover, 
\begin{equation}\label{normSigmaH-1_v0}\begin{split}\norm{\sigma(x)}^2_{\mathcal{L}_2\pr{Q^{\frac{1}{2}}\mathcal{H}^{-1};H^{-1}}}&=\sum_{k\geq 1}\norm{\sigma(x)\pr{Q^{\frac{1}{2}}e_k}}^2_{H^{-1}}=\sum_{k\geq 1}\mu_k\norm{e_kx}_{H^{-1}}^2\\&\leq \norm{x}^2_{H^{-1}}\sum_{k\geq 1}\mu_k\pr{\norm{e_k}^2_{\infty}+C^2_{H^1\subset\mathbb{L}^{\frac{2d}{d-2}}}\norm{\nabla e_k}_d^2}.\end{split}\end{equation}
\end{itemize}

\item \textbf{On $H^1_\nu$.}\\

The mapping $\left(0,1\right]\ni \nu\mapsto \norm{\cdot}_{H_\nu^{-1}\pr{\mathbb{R}^d}}$ is non-increasing while $\left(0,1\right]\ni \nu\mapsto \norm{\cdot}_{H_\nu^{1}\pr{\mathbb{R}^d}}$ is non-decreasing. As a by-product,  the constant $C_{H^1_\nu\subset\mathbb{L}^{\frac{2d}{d-2}}}$ appearing in \eqref{normSigmaH-1_v0} (in the case $\nu=1$) can be chosen independently of $\nu\in(0,1]$. Formally if we regard $\mathcal{H}^1$ as the case of  $\nu=0$ (which corresponds to the embedding $\mathcal{H}^{1}\subset \mathbb{L}^{\frac{2d}{d-2}}$), for $x \in H^{-1}_{\nu}$ 
\begin{equation}\label{normSigmaH}\begin{split}\norm{\sigma(x)}^2_{\mathcal{L}_2\pr{Q^{\frac{1}{2}}\mathcal{H}^{-1};H^{-1}_\nu}}&\leq \norm{x}^2_{H^{-1}_\nu}\sum_{k\geq 1}\mu_k\pr{\norm{e_k}^2_{\infty}+C^2_{\mathcal{H}^1\subset\mathbb{L}^{\frac{2d}{d-2}}}\norm{\nabla e_k}_d^2}.\end{split}\end{equation}

\item The same argument can be developed on $\mathcal{H}^{-1}$ leading to,  for $x \in \mathcal{H}^{-1}$ \begin{equation}\label{normSigmaCalH}\begin{split}\norm{\sigma(x)}^2_{\mathcal{L}_2\pr{Q^{\frac{1}{2}}\mathcal{H}^{-1};\mathcal{H}^{-1}}}&\leq \norm{x}^2_{\mathcal{H}^{-1}}\sum_{k\geq 1}\mu_k\pr{\norm{e_k}^2_{\infty}+C^2_{\mathcal{H}^1\subset\mathbb{L}^{\frac{2d}{d-2}}}\norm{\nabla e_k}_d^2}.\end{split}\end{equation}

\item Finally, the estimates \eqref{normSigmaH} and \eqref{normSigmaCalH} are useful for Itô's isometry yielding 
\begin{equation}\label{ItoIsom}\begin{split}
%\mathbb{E}\pp{\norm{\int_0^tX(s)dW(s)}^2_{\mathbb{H}}}&=
&\mathbb{E}\pp{\norm{\int_0^t\sigma\pr{X(s)}dW^Q(s)}^2_{\mathbb{H}}}=\sum_{k\geq 1}\mu_k\mathbb{E}\pp{\int_0^t\norm{X(s)e_k}^2_{\mathbb{H}}ds}\\&\leq \pr{\sum_{k\geq 1}\mu_k\pr{\norm{e_k}^2_{\infty}+C^2_{\mathcal{H}^1\subset\mathbb{L}^{\frac{2d}{d-2}}}\norm{\nabla e_k}_d^2}}\mathbb{E}\pp{\int_0^t\norm{X(s)}_{\mathbb{H}}^2ds},
\end{split}\end{equation}
where $\mathbb{H}\in\set{\mathcal{H}^{-1},\ H^{-1}, \ H^{-1}_\nu,\nu\in(0,1)}$.
\end{enumerate}
\subsubsection{On $\mathbb{L}^2\pr{\mathbb{R}^d}$}
The operator $J$ considered before provides one with an element $J\pr{Q^{\frac{1}{2}}u}=\underset{k\geq 1}{\sum}\sqrt{\mu_k}\scal{u,e_k}_{\mathcal{H}^{-1}}e_k\in \mathbb{L}^2\pr{\mathbb{R}^d}$. \\
By noting that,  for a given $x \in \mathbb{L}^2$,
\begin{equation}\label{normL2}
\norm{e_kx}_{\mathbb{L}^2}\leq \norm{x}_{\mathbb{L}^2}\norm{e_k}_{\infty},
\end{equation}it follows, as before, that
\begin{equation}
\norm{\sigma(x)}^2_{\mathcal{L}_2\pr{Q^{\frac{1}{2}}\mathcal{H}^{-1};\mathbb{L}^2}}\leq \sum_{k\geq 1}\mu_k\pr{\norm{e_k}^2_{\infty}+C^2_{\mathcal{H}^1\subset\mathbb{L}^{\frac{2d}{d-2}}}\norm{\nabla e_k}_d^2}\norm{x}_{\mathbb{L}^2}^2.
\end{equation}

\subsection{Some Stratonovich Considerations}
Let us consider, on one of the spaces $\mathbb{H}$ ($\mathbb{H}\in\set{\mathcal{H}^{-1},\ H^{-1}, \ H_\nu^{-1},\nu\in(0,1),\ \mathbb{L}^2}$), the equation \[dX(t)=f(X(t))dt+\sigma(X(t))dW^Q(t)=\sigma(X(t))\circ dW^Q(t),\ t\in\pp{0,T},\]with the Itô and Stratonovich differentials.  Heuristically speaking, according to Itô's formula (e.g.  \cite[Theorem 4.2.5]{Prevot-Rockner}) for $\norm{\cdot}^2_{\mathbb{H}}$,  one gets
\[
\begin{split}
\norm{X(t)}^2_{\mathbb{H}}=&\norm{X(0)}^2_{\mathbb{H}}+2\int_0^t\scal{X(s),\sigma(X(s))dW^Q(s)}_{\mathbb{H}}\\
& +\int_0^t\pr{2\scal{f(X(s)),X(s)}_{\mathbb{H}}+\norm{\sigma(X(s)}^2_{\mathcal{L}_2\pr{Q^{\frac{1}{2}}\mathcal{H}^{-1};\mathbb{H}}}}ds.
\end{split}
\]
In the spirit of Stratonovich differentials (e.g.  \cite{Ito_1975}), one would expect \[d\norm{X(t)}^2_{\mathbb{H}}=2\scal{X(t),\sigma(X(t))\circ dW^Q(t)}_{\mathbb{H}}.\]As such, a simple glance at the first line of \eqref{ItoIsom} leads one to look into the application $\mathbb{H}\ni x\mapsto \pr{\sqrt{\mu_k}xe_k}_{k\geq 1}\in {l}^2\pr{\mathbb{H}}$. It belongs to $\mathcal{L}\pr{\mathbb{H};{l}^2(\mathbb{H})}$ and, as a by-product,  the application \[\mathbb{H}\times \mathbb{H}\ni(x,y)\mapsto \tilde{f}(x,y)=\sum_{k\geq 1}\mu_k\scal{xe_k,ye_k}_{\mathbb{H}}\in\mathbb{R}\textnormal{ is a bilinear (bounded) form.}\]As before,\[\tilde{f}(x,y)\leq  \pr{\sum_{k\geq 1}\mu_k\pr{\norm{e_k}^2_{\infty}+C^2_{\mathcal{H}^1\subset\mathbb{L}^{\frac{2d}{d-2}}}\norm{\nabla e_k}_d^2}} \norm{x}_{\mathbb{H}}\norm{y}_{\mathbb{H}}.\]One can then consider 
$\sigma\otimes\sigma:\mathbb{H}\rightarrow\mathbb{H}$ defined,  via Riesz identification \begin{align}\label{SigSig}\scal{\pr{\sigma\otimes\sigma}(x),y}_{\mathbb{H}}=\tilde{f}(x,y)=\scal{\sigma(x),\sigma(y)}_{\mathcal{L}_2\pr{Q^{\frac{1}{2}}\mathcal{H}^{-1};\mathbb{H}}}. \end{align}and note that 
\begin{align}\label{normSigSig}\norm{\sigma\otimes\sigma}_{\mathcal{L}\pr{\mathbb{H};\mathbb{H}}}=\norm{\sigma}^2_{\mathcal{L}\pr{\mathbb{H};\mathcal{L}_2\pr{Q^{\frac{1}{2}}\mathcal{H}^{-1};\mathbb{H}}}}.\end{align}
It now appears obvious that one should set (in the Stratonovich sense) 
\begin{align}\label{strat}
X(t)\circ dW(t):=\sigma\pr{X(t)}\circ dW^Q(t):=X(t)dW (t)+\frac{1}{2}\pr{\sigma\otimes\sigma}\pr{X(t)}dt.
\end{align}
This is coherent with the Itô case from Remark 3.3 from \cite{BARBU20151024} or Section 2.3 from 
\cite{Prevot-Rockner}.

\subsection{The $\mathbb{L}^2\pr{\mathbb{R}^d}$ construction}
For the aim of this paper, we make the previous identification in $\mathbb{H}:=\mathbb{L}^2\pr{\mathbb{R}^d}$. Further elements on the construction on $\mathcal{H}^{-1}$ and the compatibility with the spaces $H^{-1}_\nu$ are given in the Appendix \ref{AppStrat} but they will not be used for our setting.
In other words, we define \begin{equation}
\label{SigSigDef}
\pr{\sigma\otimes\sigma}(x):=\sum_{k\geq 1}\mu_ke_k^2x.
\end{equation}
It is simple to check that \begin{align}
\label{PropSigSig}\begin{cases}
\sigma\otimes\sigma\in\mathcal{L}\pr{\mathbb{L}^2;\mathbb{L}^2},\ \norm{\sigma\otimes\sigma}_{\mathcal{L}\pr{\mathbb{L}^2;\mathbb{L}^2}}\leq \underset{k\geq 1}{\sum}\mu_k\norm{e_k}_\infty^2;\\
\scal{\pr{\sigma\otimes\sigma}(x),x}_{\mathbb{L}^2}=\norm{\sigma(x)}_{\mathcal{L}_2\pr{Q^{\frac{1}{2}}\mathcal{H}^{-1};\mathbb{L}^2}}^2;\\
\norm{\sigma\otimes\sigma}_{\mathcal{L}\pr{\mathbb{H};\mathbb{H}}}^2\leq\underset{k\geq 1}{\sum}\mu_k\norm{e_k}_\infty^2\pr{\norm{e_k}_\infty^2+4C^2_{\mathcal{H}^{1}\subset \mathbb{L}^{\frac{2d}{d-2}}}\norm{\nabla e_k}_d^2}, 
\end{cases}
\end{align}where $\ \mathbb{H}\in\set{\mathcal{H}^{-1},H^{-1},H^{-1}_\nu,\ \nu\in (0,1]}$.
The last inequality follows as before. Let us explain the reasoning in the $H^{-1}$ case.  Provided that $\phi\in C^\infty_0\pr{\mathbb{R}^d}$, whenever $\norm{\phi}_{H^1}\leq 1$, \begin{align*}
\norm{e_k^2\phi}_{H^{1}}^2&=\norm{e_k^2\phi}_{\mathbb{L}^2}^2+\norm{\nabla \pr{e_k^2\phi}}_{\mathbb{L}^2}^2\leq \pr{\norm{\phi}_{2}^2\norm{e_k}_\infty^4+4\norm{e_k}_\infty^2\norm{\nabla e_k}_{\mathbb{L}^d}^2\norm{\phi}_{\mathbb{L}^{\frac{2d}{d-2}}}^2+\norm{e_k}^4_{\infty}\norm{\nabla\phi}_2^2}\\
&\leq \norm{e_k}^2_{\infty}\pr{\norm{e_k}^2_{\infty}+4\norm{\nabla e_k}_{\mathbb{L}^d}^2\norm{\phi}_{\mathbb{L}^{\frac{2d}{d-2}}}^2}\leq \norm{e_k}^2_{\infty}\pr{\norm{e_k}^2_{\infty}+4C_{H^1\subset\mathbb{L}^{\frac{2d}{d-2}}}^{2}\norm{\nabla e_k}_d^2}.
\end{align*}
Then, \begin{align*}\norm{e_k^2x}_{H^{-1}}^2&\leq \norm{x}^2_{H^{-1}}\sup_{\norm{\phi}_{H^1}\leq 1}\norm{e_k^2\phi}_{H^1}\leq \norm{e_k}^2_{\infty}\pr{\norm{e_k}^2_{\infty}+4C_{H^1\subset\mathbb{L}^{\frac{2d}{d-2}}}^{2}\norm{\nabla e_k}_d^2}\norm{x}^2_{H^{-1}},\end{align*}leading to the inequality.
From now on, we set \begin{align}
\label{Const}
C_0:=C_{\mathcal{H}^{-1}\subset H^{-1}}^2\sum_{k\geq 1}\mu_k\pr{\norm{e_k}^2_{\infty}\vee 1}\pr{\norm{e_k}^2_{\infty}+4C^2_{\mathcal{H}^1\subset\mathbb{L}^{\frac{2d}{d-2}}}\norm{\nabla e_k}_d^2}.
\end{align}

\section{The main result}

In this paper we shall prove the existence and the uniqueness of solutions to the equation (\ref{equ}) in the sense of the
definition below.

\begin{definition} Fix any $T>0$.
Let $x\in L^{2}\left( \mathbb{R}^{d}\right) \cap \mathcal{H}^{-1}$. An $\mathcal{H}^{-1}$ - valued adapted process $%
X$ is called strong solution to (\ref{equ}) if the following conditions hold

$X(t,\xi,\omega)>0,$ $dt \times d \xi \times d \mathbb{P}$-a.e. on $\left[ 0,T\right] \times \mathbb{R}^{d}\times \Omega $,

$X\in L^{2}\left( \Omega \times \left( 0,T\right) \times \mathbb{R}
^{d};\mathbb{R} \right) \cap C_{\mathbb{P}}([0,T]; \mathcal{H}^{-1}),$

$ln \left( X\left( \cdot \right) \right) \in L^2\left( \left[ 0,T %
\right] \times \Omega ;\mathcal{H}^{1}\right) $, 

and 

\begin{equation*}
\left\langle X\left( t\right) ,e_{j}\right\rangle
_{2}=\left\langle x,e_{j}\right\rangle _{2}-\int_{0}^{t}\int_{%
\mathbb{R}
^{d}}\left\langle \nabla \ln \left( X\right) ,\nabla
e_{j}\right\rangle _{\mathbb{R}^{d}}d\xi ds+ \left\langle \int_{0}^{t}X\left( s\right)\circ dW _{s},e_{j}\right\rangle _{2}\label{def}
\end{equation*}
for all $j\in\mathbb{N}$ and all $t\in[0,T]$ where $\lbrace e_{j} \rbrace$ is the above orthonormal basis.

\end{definition}

The object of interest in this equation will be the $\ln$ function on its natural domain $\mathbb{R}_+^*:=\{r\in \mathbb{R}; r>0\}$.  For notation purposes, we extend it into a set-valued function $\Psi:\mathbb{R}\rightarrow\mathbb{R}$ by setting \begin{align*}\Psi(r)=\begin{cases}\set{\ln r},&\ \forall r\in\mathcal{D}(\Psi):=\mathbb{R}^*_+,\\
\emptyset,&\ \textnormal{otherwise}.\end{cases}\end{align*}  
It is easy to note that the potential function associated with $\Psi$ is given by the (extended-) real-valued function $j:\mathbb{R}\rightarrow\mathbb{R}\cup\set{+\infty}$, \[j(r):=\begin{cases}r \ln r-r,&\textnormal{ if }r\in\mathbb{R}^*_+,\\
0, &\textnormal{ if }r=0,\\
+\infty,&\textnormal{ otherwise}.\end{cases}\]  One easily notes $j$ to be convex and lower semi-continuous. 

\begin{remark} 
One can easily see that our definition of solution is similar to the usual ones for porous media equation. It is a strong solution from the stochastic point of view and a variational solution from the PDE point of view. 

\end{remark}

We give now the main result of this paper.

\begin{thm}
For each $x\in L^{2}\left( \mathbb{R}^{d}\right) \cap \mathcal{H}^{-1}$ such
that $x\ln x -x \in L^{1}\left( \mathbb{R}^{d}\right) $ and $x>0$ a.e. on $\mathbb{R}^{d}$, there is a unique positive solution $X$ to (\ref{equ}) in the sense of the definition above.
\end{thm}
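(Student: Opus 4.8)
The plan is to construct the solution as a limit of solutions of a triply--regularised problem, solved by classical variational SPDE theory, and then to pass to the limit in a carefully prescribed order. I would first rewrite the Stratonovich equation \eqref{equ} in It\^o form through \eqref{strat}, so that it reads $dX=\Delta\ln X\,dt+\tfrac12(\sigma\otimes\sigma)(X)\,dt+X\,dW^Q$, and introduce three regularisations: $\lambda>0$, replacing $\Psi=\ln$ by a globally Lipschitz increasing Yosida--type approximation $\Psi_\lambda$ (the ``particular form'' announced in the introduction), with associated convex potential $j_\lambda$ approximating $j(r)=r\ln r-r$; $\nu\in(0,1]$, which restores the coercivity lost on $\mathbb{R}^d$ for want of a Poincar\'e inequality, by working with the genuine Hilbert norm $\norm{\cdot}_{H^{-1}_\nu}$ and, correspondingly, with the maximal monotone operator built from $\nu\mathbb{I}-\Delta$; and $\epsilon>0$, a bounded--operator/viscosity regularisation of the diffusion making the approximate solution regular enough for the energy computations below. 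For fixed $(\lambda,\nu,\epsilon)$ the drift is Lipschitz and monotone and the noise is affine with the Hilbert--Schmidt bounds \eqref{normSigmaH} and \eqref{ItoIsom}, so the approximate equation has a unique variational solution $X^{\lambda,\nu,\epsilon}$ in the Gelfand triple $\mathbb{L}^2\subset H^{-1}_\nu\subset(\mathbb{L}^2)^*$ by the Krylov--Rozovskii/Pr\'ev\^ot--R\"ockner theory.

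Next I would establish the \emph{uniform} (in $(\lambda,\nu,\epsilon)$) a priori estimates. Applying It\^o's formula to $\norm{X^{\lambda,\nu,\epsilon}(t)}_{\mathbb{L}^2}^2$, to $\norm{X^{\lambda,\nu,\epsilon}(t)}_{H^{-1}_\nu}^2$, and to the entropy $\int_{\mathbb{R}^d}j_\lambda(X^{\lambda,\nu,\epsilon}(t,\xi))\,d\xi$, and using the dissipation $\scal{\Psi_\lambda(u),\Delta\Psi_\lambda(u)}_{\mathbb{L}^2}=-\norm{\nabla\Psi_\lambda(u)}_{\mathbb{L}^2}^2$, one obtains the energy balance \eqref{S3Eq4}. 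This is exactly the step where the Stratonovich rectification is indispensable: the correction $\tfrac12(\sigma\otimes\sigma)$ is tuned so that, together with the second--order It\^o terms generated by these formulas --- handled via \eqref{PropSigSig} and \eqref{ItoIsom} --- the right--hand side of \eqref{S3Eq4} closes into a Gronwall inequality with a constant that does not degenerate as $\lambda\to0$, something that fails with a plain It\^o noise. Taking expectations and invoking Gronwall then yields: $X^{\lambda,\nu,\epsilon}$ bounded in $L^2(\Omega\times(0,T)\times\mathbb{R}^d)\cap C_{\mathbb{P}}([0,T];\mathcal{H}^{-1})$ and $\Psi_\lambda(X^{\lambda,\nu,\epsilon})$ bounded in $L^2([0,T]\times\Omega;\mathcal{H}^1)$, hence (by \eqref{EmbedCalH}) also in $L^2([0,T]\times\Omega;\mathbb{L}^{2d/(d-2)})$.

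Then comes the passage to the limit, in the order $\epsilon\to0$, then $\nu\to0$, then $\lambda\to0$. At each stage one extracts weakly/weakly--$*$ convergent subsequences; the linear terms pass by weak convergence, the nonlinearity $\Delta\Psi_\lambda(\cdot)$ by a Minty--type monotonicity argument in the Gelfand triple combined with lower semicontinuity of $u\mapsto\int_{\mathbb{R}^d}j(u)\,d\xi$, and the stochastic integral by convergence of its quadratic variation. The order is forced: since $\Psi_\lambda(0)\to-\infty$, sending $\lambda\to0$ before removing $\nu$ would let this divergence enter the estimates with no coercive zero--order term left to absorb it, so $\nu$ must be removed while $\Psi_\lambda$ is still Lipschitz (i.e.\ $\lambda$ still positive). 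In the final step $\lambda\to0$, strict positivity of the limit is recovered from the bound on $\Psi_\lambda(X^\lambda)$ in $L^2([0,T]\times\Omega;\mathcal{H}^1)\hookrightarrow L^2(\mathbb{L}^{2d/(d-2)})$: on a set of positive $dt\times d\xi\times d\mathbb{P}$--measure where the limit $X\leq0$ one would have $\Psi_\lambda(X^\lambda)\to-\infty$, contradicting the bound; hence $X>0$ a.e., and the closedness of the maximal monotone graph of $\ln$ identifies the weak limit of $\Psi_\lambda(X^\lambda)$ as $\ln X$ and that of $\Delta\Psi_\lambda(X^\lambda)$ as $\Delta\ln X$, so that $X$ satisfies the weak formulation of the definition with the required regularity. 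Uniqueness follows by applying It\^o's formula to $\norm{X(t)-Y(t)}_{\mathcal{H}^{-1}}^2$ for two solutions with the same initial datum: the drift contributes $-2\scal{\ln X-\ln Y,X-Y}_{\mathbb{L}^2}\leq0$ by monotonicity of $\ln$, the noise terms are absorbed via \eqref{normSigmaCalH} and \eqref{ItoIsom}, and Gronwall forces $X=Y$.

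The main obstacle is this last limit $\lambda\to0$: one has to keep all estimates uniform in $\lambda$ despite $\Psi_\lambda(0)\to-\infty$ (which dictates both the sequencing $\nu\to0$ before $\lambda\to0$ and the Stratonovich --- rather than It\^o --- structure in \eqref{S3Eq4}) and, on the \emph{unbounded} domain, to recover strict positivity and to identify the limiting nonlinearity \emph{without} the compact Sobolev embeddings that would normally upgrade weak convergence of $X^\lambda$ to a.e.\ convergence --- so that the whole identification has to run through monotonicity, lower semicontinuity, and the homogeneous--Sobolev control of $\Psi_\lambda(X^\lambda)$.
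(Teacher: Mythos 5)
Your overall architecture coincides with the paper's: the same three regularisations taken in the same order $\epsilon\to0$, $\nu\to0$, $\lambda\to0$, well-posedness of the regularised problem via the variational theory in the Gelfand triple $\mathbb{L}^2\subset H^{-1}_\nu\subset V^*$, the entropy estimate \eqref{S3Eq4} as the key uniform bound, a Minty-type identification of the limit nonlinearity, and uniqueness by monotonicity in $\mathcal{H}^{-1}$. However, there are two genuine gaps. The first and most serious is your positivity argument. You claim that on a set where the limit satisfies $X\leq0$ one would have $\Psi_\lambda(X^\lambda)\to-\infty$, contradicting the $L^2(\mathcal{H}^1)$ bound. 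But $X^\lambda$ converges to $X$ only \emph{weakly} in $\mathbb{L}^2(\Omega\times(0,T)\times\mathbb{R}^d)$ (you acknowledge yourself that no compact embedding is available to upgrade this), so nothing links the pointwise behaviour of $X^\lambda$ on that set to the sign of $X$; moreover $\Psi_\lambda(0)$ is finite for each fixed $\lambda$, so no pointwise blow-up occurs along the family even where $X^\lambda$ vanishes. The paper obtains strict positivity by an entirely different mechanism: it introduces the auxiliary process $Z_N$ solving the resolvent equation $Z+\Psi(Z)=(X+\eta)\phi_N$ on $B_{N+1}$, whose solution automatically lies in $\mathcal{D}(\Psi)=\mathbb{R}^*_+$, and then shows $X=Z_N$ there; positivity is a by-product of the identification $\eta=\Psi(X)$, not a separate a.e.\ limit argument. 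Relatedly, you never establish non-negativity of the approximating solutions $X^{\varepsilon}_{\lambda,\nu}$ (the paper does this via an invariance-of-the-cone argument in Proposition \ref{PropStep2}); this is not optional, since the sign information $r\geq0$ is exactly what Proposition \ref{PropAux} requires to close the entropy estimate \eqref{S3Eq4}.

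The second gap is in the identification step. Invoking ``closedness of the maximal monotone graph of $\ln$'' requires the energy inequality
\begin{equation*}
\underset{\lambda\rightarrow0+}{\lim\sup}\ \mathbb{E}\pp{\int_0^T\int_{\mathbb{R}^d}\tilde{\Psi}_\lambda\pr{X_\lambda}X_\lambda\, d\xi\, dt}\leq \mathbb{E}\pp{\int_0^T\int_{\mathbb{R}^d}\eta\, X\, d\xi\, dt},
\end{equation*}
and on an unbounded domain with only weak convergence of $X_\lambda$ this is the hard part of the proof: the paper must first prove \emph{strong} convergence of the localised processes $X_\lambda\mathbf{1}_K$ in $\mathbb{L}^2(\Omega\times[0,T];\mathcal{H}^{-1})$ (Proposition \ref{PropStep5} and \eqref{Estim++}), which in turn goes back to an It\^o computation at the $(\varepsilon,\nu)$ level and uses the uniform bound \eqref{Estim_lambda_2} together with a local Poincar\'e inequality. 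Your sketch treats this as a routine Minty argument plus lower semicontinuity of $u\mapsto\int j(u)\,d\xi$; the lower semicontinuity is indeed used, but without the localisation machinery and the $\lim\sup$ inequality the monotonicity argument does not close.
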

%\textcolor{red}
%\begin{remark}
%If  $x\in L^{2}\left( \mathbb{R}^{d}\right) \cap \mathcal{H}^{-1}$ satisfying  $x>0$ a.e. on $\mathbb{R}^{d}$, 
% then  $x\ln x \in L^{1} \left( \mathbb{R}^{d}\right) $. 
% \end{remark}
% \textcolor{red}{
%\begin{remark}
%This positivity of the solution would imply that there is no finite time extinction as in the deterministic case 
%see for e.g. Section 8 of \cite{VASQUEZ}, although the sense of positivity is a.e. on $\left[ 0,T\right] \times \mathbb{R}^{d}\times \Omega $.
%\end{remark}
%}

\section{Proof of the Main Result}

\textbf{Step 1.} As announced in the introduction, we shall take three successive approximations of the solution.\\
We first consider the equation driven by the regularized version of $\Psi$. Namely, having fixed $\lambda>0$, we set 
\begin{equation}
\tilde{\Psi}_\lambda(r):=\Psi_\lambda(r)-\Psi_\lambda(0)+\lambda r,
\label{aprox1}
\end{equation}
for all $r\in\mathbb{R}$.

If  Moreau's theorem applies such that the sub-differential of the potential $j$ (i.e.  $\Psi$) can be approximated via the gradients of the inf-convolutions of $j$ denoted by \[j_\lambda(r):=\inf_{\bar{r}\in\mathbb{R}}\set{j\pr{\bar{r}}+\frac{1}{2\lambda}\abs{r-\bar{r}}^2},\ \forall \lambda>0,\]
we can set $\Psi_\lambda:=\nabla j_\lambda$ and we recall that this corresponds to Yosida's approximations of $\Psi$ i.e.  \[\Psi_\lambda=\frac{1}{\lambda}\pr{\mathbb{I}-\pr{\mathbb{I}+\lambda\Psi}^{-1}}=\Psi \circ\pr{\mathbb{I}+\lambda\Psi}^{-1},\]
which appears in the approximation (\ref{aprox1}).

In the second term of (\ref{aprox1}), $\Psi_\lambda(0)$ was introduced in order to get $\tilde{\Psi}_\lambda(0)=0$ and the final term is necessary in order to have the strong monotonicity property for our operator.

We consider the first approximating equation
\begin{equation}
\label{Eq_lambda}
\left\lbrace 
\begin{split} dX_\lambda(t)&=\bigtriangleup \tilde{\Psi}_\lambda\pr{X_\lambda(t)}\ dt\ +\frac{1}{2}\pr{\sigma\otimes\sigma}\pr{X_\lambda(t)}\ dt+X_\lambda(t)\ dW (t),\ t\geq 0;\\
X_\lambda(0)&=x.\end{split}\right.
\end{equation}

\textbf{Step 2.} Moreover, we consider a further perturbation of the operator $\bigtriangleup$ i.e.  $\bigtriangleup-\nu \mathbb{I}$, for $\nu>0$ and $\mathbb{I}$ being the identity operator. Whenever convenient, we will drop this $\mathbb{I}$ operator and merely write $\bigtriangleup-\nu$. As a consequence, we introduce
\begin{equation}
\label{Eq_lambda_nu}
\left\lbrace 
\begin{split} dX_{\lambda,\nu}(t)&=\pr{\bigtriangleup-\nu} \tilde{\Psi}_\lambda\pr{X_{\lambda,\nu}(t)}\ dt\ +\frac{1}{2}\pr{\sigma\otimes\sigma}\pr{X_{\lambda,\nu}(t)}\ dt+X_{\lambda,\nu}(t)\ dW(t),\ t\geq 0;\\
X_{\lambda,\nu}(0)&=x.\end{split}\right.
\end{equation}
The arguments on the well-posedness of the equation (\ref{Eq_lambda_nu}) follow from \cite[Chapter 4]{Prevot-Rockner}.  
To cope with the notations in \cite[Chapter 4]{Prevot-Rockner}, we conveniently denote by \begin{align*}A&:=\pr{\bigtriangleup-\nu} \tilde{\Psi}_\lambda+\frac{1}{2}\pr{\sigma\otimes\sigma}:V:=\mathbb{L}^{2}\pr{\mathbb{R}^d}\rightarrow V^*, \\ B&:=\sigma:V\rightarrow \mathcal{L}_2\pr{Q^{\frac{1}{2}}\mathcal{H}^{-1}\pr{\mathbb{R}^d};H_\nu^{-1}\pr{\mathbb{R}^d}},\end{align*}two deterministic operators.  
For subsequent developments, we will also employ
\[A_\nu:=\pr{\bigtriangleup-\nu} \tilde{\Psi}_\lambda:V:=\mathbb{L}^{2}\pr{\mathbb{R}^d}\rightarrow V^*,\] 
the operator without the Stratonovich contribution to the drift.  In this sense, the equation can be considered either as an $A$-driven one or as an $A_\nu$-driven, $\frac{1}{2}\sigma\otimes\sigma$-drift perturbed one.\\
The Gelfand triple is based on the inclusion $V:=\mathbb{L}^2\pr{\mathbb{R}^d}\subset H^{-1}_\nu \subset V^*$ (see also Remark \ref{RemIncl}, assertion 1 in the case $\nu=1$).  In particular, one recalls (see \cite[Example 4.1.11]{Prevot-Rockner} and subsequent assertions)\begin{align}\label{V*V}\scal{u,x}_{\pr{V,V^*}}=\scal{u,x}_{H_\nu^{-1}}
=\scal{\pr{\nu-\bigtriangleup}^{-\frac{1}{2}}u, \pr{\nu-\bigtriangleup}^{-\frac{1}{2}}x}_2,\end{align}as soon as $x\in H_\nu^{-1}$(for this particularly useful equality, the reader is referred to \cite[Remark 4.1.14]{Prevot-Rockner} by bearing in mind the operator used in our setting i.e. $\bigtriangleup-\nu$).
For our readers' sake, the main elements of proof are provided in the Appendix \ref{A1}. 
According to \cite[Theorem 4.2.4]{Prevot-Rockner}, the equation \eqref{Eq_lambda_nu} admits a unique solution denoted by $X_{\lambda,\nu}$ 
such that 
\begin{enumerate}
\item $X_{\lambda,\nu}$ is $\mathbb{F}$-adapted and with continuous $H^{-1}_\nu$-valued trajectories; 
\item $X_{\lambda,\nu} \in \mathbb{L}^2\pr{\pp{0,T}\times \Omega;\mathbb{L}^2\pr{\mathbb{R}^d}}$
thus belongs to $\mathbb{L}^2\pr{\pp{0,T}\times\Omega;H^{-1}_\nu}$, too. 
\item $\mathbb{E}\pp{\sup_{t\in\pp{0,T}}\norm{X_{\lambda,\nu}(t)}^2_{H_\nu^{-1}}}<\infty.$
\end{enumerate}

\textbf{Step 3.} The approximating solution $X_{\lambda,\nu}^{\varepsilon}$.\\
At this point, let us introduce the following SDE \begin{equation}\label{Eq_lne}
\left\lbrace \begin{split}dX_{\lambda,\nu}^{\varepsilon}(t)&=-A_\nu^{\varepsilon}\pr{X_{\lambda,\nu}^{\varepsilon}(t)}\ dt\ +\sigma(X_{\lambda,\nu}^{\varepsilon}(t))\ \circ\ dW^Q(t),\ t\geq 0;\\
X_{\lambda,\nu}^{\varepsilon}(0)&=x.
\end{split}\right.
\end{equation}
We recall that the Yosida approximation $A_\nu^\varepsilon:=\frac{1}{\varepsilon}\pr{\mathbb{I}-\pr{\mathbb{I}+\varepsilon A_\nu}^{-1}}$ and the resolvent $\mathbb{J}_{\varepsilon}:=\pr{\mathbb{I}+\varepsilon A_\nu}^{-1}$ are Lipschitz-continuous in this context both in $H^{-1}\pr{\mathbb{R}^d}$ and in $\mathbb{L}^2\pr{\mathbb{R}^d}$.

We shall now pass to the limit in the following order $\varepsilon$, $\nu$ and finally $\lambda$.

\textbf{Step 4.} 
The passage to the limit for $\varepsilon$ is based on the following result.

\begin{proposition}\label{PropStep2}
Let $T>0$ be a finite and fixed time horizon. Then, the following assertions hold true. 
\begin{enumerate}
\item There exists a generic constant $C$ only depending on $T$ and $C_0$, but not of $\varepsilon,\ \lambda,\ \nu$ such that 
\begin{equation}\label{Estim1}\begin{cases} 
\mathbb{E}\pp{\underset{0\leq t\leq T}{\sup}\norm{X^\varepsilon_{\lambda,\nu}(t)}^2_{H^{-1}_\nu}}\leq C\norm{x} ^2_{H^{-1}_\nu};\ \mathbb{E}\pp{\underset{0\leq t\leq T}{\sup}\norm{X^\varepsilon_{\lambda,\nu}(t)}^2_{\mathbb{L}^2}}\leq C\norm{x} ^2_{\mathbb{L}^2};\\
\mathbb{E}\pp{\underset{0\leq t\leq T}{\sup}\norm{X_{\lambda,\nu}(t)}^2_{H^{-1}_\nu}}\leq C\norm{x} ^2_{H^{-1}_\nu};\ 
\underset{0\leq t\leq T}{ess\ sup}\ \mathbb{E}\pp{\norm{X_{\lambda,\nu}(t)}^2_{\mathbb{L}^2}}\leq C\norm{x} ^2_{\mathbb{L}^2}.
\end{cases}
\end{equation}
\item One has the convergence $X_{\lambda,\nu}^{\varepsilon}\underset{\varepsilon\rightarrow 0}{\rightarrow} X_{\lambda,\nu}$ strongly in $\mathbb{L}^\infty\pr{\pp{0,T};\mathbb{L}^2\pr{\Omega; H^{-1}\pr{\mathbb{R}^d}}}$ and weakly in $\mathbb{L}^\infty\pr{\pp{0,T};\mathbb{L}^2\pr{\Omega;\mathbb{L}^2\pr{\mathbb{R}^d}}}$.
\item If the initial datum $x$ is non-negative, the solution $X_{\lambda,\nu}^\varepsilon(t)$ 
remains non-negative $\mathbb{P}$-a.s. and for all $t\in\pp{0,T}$.
\end{enumerate}
\end{proposition}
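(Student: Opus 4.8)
The plan is to prove the three assertions in the order they are stated, using Itô's formula together with the monotonicity of the Yosida approximation and Gronwall's lemma, and then a monotone‑operator closure for the passage $\varepsilon\to 0$. Throughout, $A_\nu^\varepsilon$ denotes the Yosida approximation of the maximal monotone operator $u\mapsto\pr{\nu-\bigtriangleup}\tilde{\Psi}_\lambda(u)$ (which vanishes at $0$, since $\tilde\Psi_\lambda$ is nondecreasing with $\tilde\Psi_\lambda(0)=0$), and $\mathbb{J}_\varepsilon$ its nonexpansive resolvent, so that $A_\nu^\varepsilon u=\pr{\nu-\bigtriangleup}\tilde\Psi_\lambda\pr{\mathbb{J}_\varepsilon u}$, $\mathbb{J}_\varepsilon u=u-\varepsilon A_\nu^\varepsilon u$, and $\scal{A_\nu^\varepsilon u,u}_{H^{-1}_\nu}=\scal{\tilde\Psi_\lambda(\mathbb{J}_\varepsilon u),\mathbb{J}_\varepsilon u}_{2}+\varepsilon\norm{A_\nu^\varepsilon u}^2_{H^{-1}_\nu}\geq\varepsilon\norm{A_\nu^\varepsilon u}^2_{H^{-1}_\nu}\geq 0$; the same pairing is $\geq 0$ in $\mathbb{L}^2$ as well.

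\textbf{Assertion 1 (a priori bounds).} Since, for fixed $\varepsilon,\lambda,\nu$, the coefficients of \eqref{Eq_lne} (rewritten in Itô form via \eqref{strat}, i.e. with the additional drift $\frac12\pr{\sigma\otimes\sigma}\pr{X^\varepsilon_{\lambda,\nu}}$) are Lipschitz on both $H^{-1}\pr{\mathbb{R}^d}$ and $\mathbb{L}^2\pr{\mathbb{R}^d}$, the solution $X^\varepsilon_{\lambda,\nu}$ is simultaneously $H^{-1}_\nu$- and $\mathbb{L}^2$-valued, so I would apply Itô's formula to $\norm{X^\varepsilon_{\lambda,\nu}(t)}^2_{H^{-1}_\nu}$ and to $\norm{X^\varepsilon_{\lambda,\nu}(t)}^2_{\mathbb{L}^2}$. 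The drift terms $\scal{A_\nu^\varepsilon X^\varepsilon_{\lambda,\nu},X^\varepsilon_{\lambda,\nu}}$ are $\geq 0$ and may be dropped; the noise contributions are controlled by the constant $C_0$ of \eqref{Const} through \eqref{normSigmaH}, \eqref{PropSigSig} and Itô's isometry \eqref{ItoIsom} (in $\mathbb{L}^2$ the Stratonovich correction merges with the quadratic-variation term by the second identity in \eqref{PropSigSig}). Taking suprema in $t$, then expectations, estimating the stochastic integral by Burkholder--Davis--Gundy, absorbing and applying Gronwall's lemma yields the first line of \eqref{Estim1} with a constant depending only on $T$ and $C_0$. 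The two bounds for $X_{\lambda,\nu}$ follow from the very same computation on \eqref{Eq_lambda_nu} — the $\mathbb{L}^2$-bound being only an essential supremum because $X_{\lambda,\nu}$ need not have $\mathbb{L}^2$-continuous trajectories — or, a posteriori, by lower semicontinuity once Assertion 2 is available. As a by-product of the $H^{-1}_\nu$-estimate and $\scal{A_\nu^\varepsilon u,u}_{H^{-1}_\nu}\geq\varepsilon\norm{A_\nu^\varepsilon u}^2_{H^{-1}_\nu}$, one obtains the uniform bound $\mathbb{E}\int_0^T\varepsilon\norm{A_\nu^\varepsilon\pr{X^\varepsilon_{\lambda,\nu}(s)}}^2_{H^{-1}_\nu}\,ds\leq C\norm{x}^2_{H^{-1}_\nu}$, hence $X^\varepsilon_{\lambda,\nu}-\mathbb{J}_\varepsilon X^\varepsilon_{\lambda,\nu}=\varepsilon A_\nu^\varepsilon X^\varepsilon_{\lambda,\nu}\to 0$ in $\mathbb{L}^2\pr{\pp{0,T}\times\Omega;H^{-1}_\nu}$ as $\varepsilon\to 0$. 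This last fact is the workhorse of Assertion 2.

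\textbf{Assertion 2 (convergence).} From \eqref{Estim1} the family $\set{X^\varepsilon_{\lambda,\nu}}_\varepsilon$ is bounded in $\mathbb{L}^2\pr{\pp{0,T}\times\Omega;\mathbb{L}^2}$; since $\mathbb{J}_\varepsilon$ is non-expansive also on $\mathbb{L}^2$ (test $\mathbb{J}_\varepsilon y+\varepsilon\pr{\nu-\bigtriangleup}\tilde\Psi_\lambda\pr{\mathbb{J}_\varepsilon y}=y$ against $\mathbb{J}_\varepsilon y$ and drop the nonnegative terms), the family $\set{A_\nu^\varepsilon X^\varepsilon_{\lambda,\nu}}_\varepsilon$ is bounded in $\mathbb{L}^2\pr{\pp{0,T}\times\Omega;V^*}$ because $\norm{A_\nu^\varepsilon u}_{V^*}=\norm{\tilde\Psi_\lambda\pr{\mathbb{J}_\varepsilon u}}_2\leq L_\lambda\norm{u}_2$, $L_\lambda$ being the Lipschitz constant of $\tilde\Psi_\lambda$. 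I would extract (along a subsequence) weak limits $X^\varepsilon_{\lambda,\nu}\rightharpoonup\bar X$ in $\mathbb{L}^2\pr{\pp{0,T}\times\Omega;\mathbb{L}^2}$, $A_\nu^\varepsilon X^\varepsilon_{\lambda,\nu}\rightharpoonup\eta$ in $\mathbb{L}^2\pr{\pp{0,T}\times\Omega;V^*}$, $\mathbb{J}_\varepsilon X^\varepsilon_{\lambda,\nu}\rightharpoonup\bar X$ (same limit, by the by-product above), and — crucially because $\sigma$ is \emph{linear} — $\sigma\pr{X^\varepsilon_{\lambda,\nu}}\rightharpoonup\sigma\pr{\bar X}$; passing to the limit in \eqref{Eq_lne} shows $\bar X$ solves $d\bar X=-\eta\,dt+\frac12\pr{\sigma\otimes\sigma}(\bar X)\,dt+\sigma(\bar X)\,dW$, $\bar X(0)=x$. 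I would then identify $\eta=\pr{\nu-\bigtriangleup}\tilde\Psi_\lambda(\bar X)$ by a Minty argument: for any admissible test process $v$, $0\leq\mathbb{E}\int_0^T\scal{A_\nu^\varepsilon X^\varepsilon_{\lambda,\nu}-\pr{\nu-\bigtriangleup}\tilde\Psi_\lambda(v),\,\mathbb{J}_\varepsilon X^\varepsilon_{\lambda,\nu}-v}_{V,V^*}\,ds$, and one passes to the limit using the energy identities got by Itô's formula applied to $\norm{X^\varepsilon_{\lambda,\nu}(T)}^2_{H^{-1}_\nu}$ and $\norm{\bar X(T)}^2_{H^{-1}_\nu}$ in order to control $\limsup_\varepsilon\mathbb{E}\int_0^T\scal{A_\nu^\varepsilon X^\varepsilon_{\lambda,\nu},X^\varepsilon_{\lambda,\nu}}\,ds$. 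Here again linearity of $\sigma$ is essential, since the noise contribution to the energy of the difference is $\scal{\pr{\sigma\otimes\sigma}\pr{X^\varepsilon_{\lambda,\nu}-\bar X},X^\varepsilon_{\lambda,\nu}-\bar X}_{H^{-1}_\nu}+\norm{\sigma\pr{X^\varepsilon_{\lambda,\nu}-\bar X}}^2_{\mathcal{L}_2}\leq C_0\norm{X^\varepsilon_{\lambda,\nu}-\bar X}^2_{H^{-1}_\nu}$, which enters with a Gronwall‑compatible sign. By the uniqueness recalled in Step 2, $\bar X=X_{\lambda,\nu}$ and the whole family converges. Finally, the energy identity forces $\norm{X^\varepsilon_{\lambda,\nu}}^2_{H^{-1}_\nu}\to\norm{X_{\lambda,\nu}}^2_{H^{-1}_\nu}$, which together with weak convergence upgrades to strong convergence in $\mathbb{L}^2\pr{\pp{0,T}\times\Omega;H^{-1}}$; reusing Itô's formula for $\norm{X^\varepsilon_{\lambda,\nu}-X_{\lambda,\nu}}^2_{H^{-1}_\nu}$ with Burkholder--Davis--Gundy and Gronwall (the drift errors being now negligible by the by-product and the strong $\mathbb{L}^2_t$-convergence) promotes it to strong convergence in $\mathbb{L}^\infty\pr{\pp{0,T};\mathbb{L}^2\pr{\Omega;H^{-1}}}$. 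The weak convergence in $\mathbb{L}^\infty\pr{\pp{0,T};\mathbb{L}^2\pr{\Omega;\mathbb{L}^2}}$ is immediate from the uniform bound in \eqref{Estim1}.

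\textbf{Assertion 3 (non‑negativity) and the main obstacle.} If $x\geq 0$, I would apply Itô's formula to $\norm{\pp{X^\varepsilon_{\lambda,\nu}(t)}^-}^2_{H^{-1}_\nu}$ (after a routine $C^2$-regularization of $r\mapsto(r^-)^2$ and a limit passage), using that $\scal{A_\nu^\varepsilon\pr{X^\varepsilon_{\lambda,\nu}},-\pp{X^\varepsilon_{\lambda,\nu}}^-}\geq 0$ because $\mathbb{J}_\varepsilon$ preserves positivity ($\tilde\Psi_\lambda$ nondecreasing, $\tilde\Psi_\lambda(0)=0$, and $\pr{\nu-\bigtriangleup}^{-1}$ positivity‑preserving); since $\sigma$ is linear, the negative part satisfies a closed linear inequality with no source, so Gronwall gives $\mathbb{E}\norm{\pp{X^\varepsilon_{\lambda,\nu}(t)}^-}^2_{H^{-1}_\nu}=0$, i.e. $X^\varepsilon_{\lambda,\nu}(t)\geq 0$ $\mathbb{P}$-a.s. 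The hard part of the whole proposition is the passage $\varepsilon\to 0$ in Assertion 2: the Lipschitz constant of $A_\nu^\varepsilon$ blows up like $1/\varepsilon$, so naive stability or Cauchy estimates fail, and one must run a genuine maximal‑monotone (Minty) closure directly at the level of the stochastic equation, controlling the quadratic noise terms through the linearity of $\sigma$ and the bound encoded in \eqref{Const}, and then recover \emph{strong} $H^{-1}$-convergence from the energy identity; the a priori estimates and the non‑negativity are comparatively routine monotonicity‑plus‑Gronwall arguments.
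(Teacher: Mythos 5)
Your treatment of Assertion 1 matches the paper's (Itô's formula for the two squared norms, dropping the non-negative monotone drift pairings, BDG, Gronwall, and recovering the $\mathbb{L}^2$-bound for $X_{\lambda,\nu}$ only as an essential supremum by passing to the limit). For Assertion 2 the paper simply invokes \cite[Claim 3 in Lemma 6.4.5]{BDPR_2016}, whereas you sketch a self-contained Minty closure at the level of the stochastic equation; that is a legitimate alternative route (the linearity of $\sigma$ does make the noise terms Gronwall-compatible, and your by-product $\varepsilon A_\nu^\varepsilon X^\varepsilon_{\lambda,\nu}\to 0$ in $\mathbb{L}^2\pr{\pp{0,T}\times\Omega;H^{-1}_\nu}$ is the right workhorse), though it is heavier than the Cauchy-in-$\varepsilon$ argument the cited lemma runs, and the details of the limsup energy inequality would need to be written out.

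The genuine gap is in Assertion 3. You propose to apply Itô's formula to $\norm{\pp{X^\varepsilon_{\lambda,\nu}(t)}^-}^2_{H^{-1}_\nu}$ and to use the sign of $\scal{A_\nu^\varepsilon x,-x^-}$. This cannot work in $H^{-1}_\nu$: the functional $x\mapsto\norm{x^-}^2_{H^{-1}_\nu}$ is not the squared distance to the positive cone of $H^{-1}_\nu$ and is not Fréchet differentiable with a usable derivative, and, more importantly, the $H^{-1}_\nu$ pairing $\scal{A_\nu^\varepsilon x,x^-}_{H^{-1}_\nu}$ involves $\pr{\nu-\bigtriangleup}^{-1}$ and therefore does \emph{not} reduce to the pointwise product on which the order argument rests; its sign is not controlled. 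The whole point of the paper's proof is to run this step in $\mathbb{L}^2\pr{\mathbb{R}^d}$, where $d_K^2(x)=\norm{x^-}^2_{\mathbb{L}^2}$ is the squared distance to the closed convex cone $K=\mathbb{L}^2\pr{\mathbb{R}^d;\mathbb{R}_+}$, the invariance criterion of \cite[Theorem 3.5]{BQT_2008} applies, $\scal{\sigma\otimes\sigma(x),x^-}_2$ exactly cancels the quadratic-variation contribution, and the key inequality $\scal{A_\nu^\varepsilon x,x^-}_2\leq 0$ follows from the resolvent estimate $\scal{\pr{J_\varepsilon x}^-,x^-}_2\leq\norm{x^-}_2^2$ (a T-monotonicity property, which is strictly more than the bare positivity preservation of $\mathbb{J}_\varepsilon$ that you invoke). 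Replacing your $H^{-1}_\nu$ computation by this $\mathbb{L}^2$ one repairs the argument.
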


For our readers' comfort, we provide a few elements of proof. Please note that, as a consequence, the solution $X_{\lambda,\nu}$ remains non-negative if the initial datum $x$ is non-negative.

\begin{proof}[Elements of proof for Proposition \ref{PropStep2}]
\begin{enumerate}
\item We begin with the behaviour in $H^{-1}\pr{\mathbb{R}^d}$.\\
Since, in this framework we deal with a Lipschitz-coefficient-driven equation \eqref{Eq_lne}, the solution is unique in $\mathbb{L}^2_{\mathbb{F}}\pr{\Omega;C\pr{\pp{0,T};H^{-1}\pr{\mathbb{R^d}}}}$ (i.e.  an $\mathbb{F}$-adapted process that can be seen as a $C$-valued square integrable random variable). To this process, one applies, for $\nu>0$, the usual (Hilbert-space) Itô formula to get, for the function $\frac{1}{2}\norm{\cdot}_{H^{-1}_\nu}^2$,
\begin{align*}
&\frac{1}{2}{\norm{X^\varepsilon_{\lambda,\nu}(t)}^2_{H^{-1}_\nu}}\\
=&\frac{1}{2}{\norm{x}^2_{H^{-1}_\nu}}+\int_0^t\pr{\scal{-A^{\varepsilon}_\nu X^\varepsilon_{\lambda,\nu}(s),X^\varepsilon_{\lambda,\nu}(s)}_{H^{-1}_\nu}+\frac{1}{2}\scal{\pr{\sigma\otimes\sigma}\pr{X_{\lambda,\nu}^\varepsilon(s)},X_{\lambda,\nu}^\varepsilon(s)}_{H_\nu^{-1}}}\ ds\\&+\frac{1}{2}\int_0^t\norm{\sigma\pr{X^\varepsilon_{\lambda,\nu}(s)}}^2_{\mathcal{L}_2\pr{Q^{\frac{1}{2}}\mathcal{H}^{-1};H^{-1}_\nu}}ds\\&+\int_0^t\scal{X^\varepsilon_{\lambda,\nu}(s),\sigma\pr{X_{\lambda, \nu}^\varepsilon(s)}dW^{Q}(s)}_{H^{-1}_\nu}.
\end{align*}
Taking into account the monotonicity of the operator and due to Burkholder-Davis-Gundy inequality combined with \eqref{ItoIsom},  \eqref{normSigmaH} and \eqref{PropSigSig}, it follows that, for $0\leq t\leq r\leq T$,
\begin{align*}
\mathbb{E}\pp{\sup_{0\leq t\leq r}\norm{X^\varepsilon_{\lambda,\nu}(t)}^2_{H^{-1}_\nu}}\leq C\pr{\norm{x}_{H^{-1}_\nu}^2+\int_0^r\mathbb{E}\pp{\norm{X^\varepsilon_{\lambda,\nu}(s)}^2_{H^{-1}_\nu}}ds},
\end{align*}
and one concludes using Gronwall's inequality. 
The same kind of argument applies directly to the solution $X_{\lambda,\nu}$. 

\begin{comment}Let us give some further details. First, let us note that \[\tilde{\Psi}_\lambda(x)x\geq \pr{1+\lambda+\frac{1}{\lambda}}^{-1}\pr{\tilde\Psi_\lambda(x)}^2\geq \frac{\lambda}{2}\pr{\tilde\Psi_\lambda(x)}^2\, \ \forall x\in\mathbb{R},\]and all $\lambda>0$ small enough. As a consequence, \[\scal{-A_\nu X_{\lambda,\nu},X_{\lambda,\nu}}_{H_\nu^{-1}}=-\scal{\tilde{\Psi}_\lambda \pr{X_{\lambda,\nu}},X_{\lambda,\nu}}_{\mathbb{L}^2}\leq -\frac{\lambda}{2}\norm{\tilde{\Psi}_\lambda \pr{X_{\lambda,\nu}}}_{\mathbb{L}^2}^2.\]The conclusion follows as before.
\end{comment}

\item Let us now turn to the estimates in $\mathbb{L}^2\pr{\mathbb{R}^d}$. \\Since, in this case the coefficients are Lispchitz-continuous, the equation is well-posed  and provides a solution in $\mathbb{L}^2_{\mathbb{F}}\pr{\Omega;C\pr{\pp{0,T};\mathbb{L}^2\pr{\mathbb{R^d}}}}$. 
Then, by applying Itô's formula for $\norm{\cdot}_{\mathbb{L}^2}^2$ and using a standard argument (e.g. \cite[Page 908, Line 5]{criticality} guaranteeing that $\scal{A_\nu^\varepsilon X_{\lambda,\nu}^\varepsilon,X_{\lambda,\nu}^\varepsilon}_{\mathbb{L}^2}\geq0$), one is able to find a generic constant $C>0$, still depending on $T$ and $C_0$ but independent of $\varepsilon,\ \nu,\ \lambda$ such that \begin{equation}
\mathbb{E}\pp{\sup_{0\leq t\leq T}\norm{X^\varepsilon_{\lambda,\nu}(t)}^2_{\mathbb{L}^2}}\leq C\norm{x}^2_{\mathbb{L}^2},\ \forall x\in \mathbb{L}^2\pr{\mathbb{R}^d},\ \forall t\in \pp{0,T}.
\end{equation}.

\item The proof of the convergence is identical to the one in \cite[Claim 3 in Lemma 6.4.5]{BDPR_2016}. 

\item The remaining inequality on $\mathbb{L}^2\pr{\mathbb{R}^d}$ norms involving $X_{\lambda,\nu}$ follows by passing to the limit as $\varepsilon\rightarrow 0+$. 
Let us give a few details. Along some subsequence,  $X_{\lambda,\nu}^\varepsilon\rightarrow X_{\lambda,\nu}$, $dt\times\mathbb{P}\times d\xi$-a.s. on $\pp{0,T}\times \Omega\times \mathbb{R}^d$.  In particular, it follows that $X_{\lambda,\nu}\in\mathbb{L}^2\pr{\Omega\times \pp{0,T};\mathbb{L}^2\pr{\mathbb{R}^d}}$. On the other hand, due to the proofs in item 2., one has 
$\frac{1}{b-a}\int_a^b\mathbb{E}\pp{\norm{X_{\lambda,\nu}^\varepsilon(s)}^2_{\mathbb{L}^2}}dr\leq C\norm{x}_{\mathbb{L}^2}^2,$ (for all $0\leq a<b\leq T$). Passing to the limit along the aforementioned sequence,  $\frac{1}{b-a}\int_a^b\mathbb{E}\pp{\norm{X_{\lambda,\nu}(s)}_{\mathbb{L}^2}^2}dr\leq C\norm{x}_{\mathbb{L}^2}^2,$ for all $0\leq a<b\leq T$. Then, for every Lebesgue point of $\mathbb{E}\pp{\norm{X_{\lambda,\nu}(\cdot)}_{\mathbb{L}^2}^2}$ (hence,  almost surely on $\pp{0,T}$), $\mathbb{E}\pp{\norm{X_{\lambda,\nu}(t)}_{\mathbb{L}^2}^2}\leq C\norm{x}_{\mathbb{L}^2}^2$.

\item Finally, to prove non-negativeness, one works in $\mathbb{L}^2\pr{\mathbb{R}^d}$ and considers the closed subspace $K:=\mathbb{L}^2\pr{\mathbb{R}^d;\mathbb{R}_+}$.  It is clear that the projector $\Pi_K(x)=x^+$ ($\mathbb{P}$-a.s.) is single-valued and the distance $d_K^2(x)=\norm{x^-}_{\mathbb{L}^2}^2$ is regular. 
Here, $x^{-} := -x$ if $x<0$, and $0$ if $x \ge 0$ and $x^{+} := x$ if $x\ge 0$, and $0$ if $x < 0$,  thus $x=x^+ -x^-$.
Then, one can apply the result in \cite[Theorem 3.5]{BQT_2008} to get a sufficient condition (cf.  \cite[Section 4.3]{BQT_2008}) reading
\[-\sum_{k\geq 1}\mu_k\norm{e_kx^-}_{\mathbb{L}^2}^2+2\scal{-A_\nu^\varepsilon x+\frac{1}{2}\sigma\otimes\sigma(x),x^-}_2+C\norm{x^-}_{\mathbb{L}^2}^2\geq 0,\]for some $C>0$ and every $x\in\mathbb{L}^2\pr{\mathbb{R}^d}$. One notes that $\scal{\sigma\otimes\sigma(x),x^-}_2=\underset{k\geq 1}{\sum}\mu_k\scal{e_kx,e_kx^-}_2=-\underset{k\geq 1}{\sum}\mu_k\norm{e_kx^-}_{\mathbb{L}^2}^2$.  In view of the inequality \eqref{normL2}, one only needs to show that $\scal{A_\nu^\varepsilon x,x^-}_2\leq C\norm{x^-}_{\mathbb{L}^2}^2$ (for some generic constant $C>0$). We recall that $\scal{\pr{J_\varepsilon x}^-,x^-}_{2}\leq \norm{x^-}_2^2$ (see \cite[Eq.  (2.81)]{BDPR_2016} such that 
\begin{align*}
\scal{A_\nu^\varepsilon x,x^-}_2&=\scal{\frac{1}{\varepsilon}(x-J_\varepsilon x),x^-}_2=-\frac{1}{\varepsilon}\norm{x^-}_{\mathbb{L}^2}^2-\scal{\frac{1}{\varepsilon}J_\varepsilon x,x^-}_2\\
&\leq -\frac{1}{\varepsilon}\norm{x^-}_{\mathbb{L}^2}^2+\scal{\frac{1}{\varepsilon}\pr{J_\varepsilon x}^-,x^-}_2\leq 0.
\end{align*}
%This is a complete analogue of \cite[Theorem 2.6.2]{BDPR_2016} on positiveness using a penalty a regularisation %$G_\delta(r)=\frac{(r^-)^4}{\delta+(r^-)^2}$.
\end{enumerate}
\end{proof}

\textbf{Step 5.} The passage to the limit as $\nu\rightarrow 0+$ is based on the following result. 
In this section we obtain also some estimates which are necessary for the next step.
\begin{proposition}\label{PropStep3}For $T>0$ fixed, the following hold true.
\begin{enumerate}
\item For every $\lambda>0$, $X_{\lambda,\nu}$ converges, as $\nu\rightarrow 0+$ to $X_\lambda$ strongly in $\mathbb{L}^{2}\pr{\Omega;C\pr{\pp{0,T};H^{-1}}}$;
\item There exists a generic constant $C>0$ independent of $\lambda>0$ and of the initial datum $x$ such that \begin{equation}
\label{Estim_lambda_1}
\begin{cases}
(a)\ \mathbb{E}\pp{\underset{0\leq t\leq T}{\sup}\norm{X_{\lambda}(t)}_{\mathcal{H}^{-1}}^2}\leq C\norm{x}_{\mathcal{H}^{-1}}^2;\\
(b)\ \underset{0\leq t\leq T}{ess\ sup}\ \mathbb{E}\pp{\norm{X_{\lambda}(t)}_{\mathbb{L}^2}^2}\leq C\norm{x}_{\mathbb{L}^2}^2.
\end{cases}
\end{equation}
\item Recall that the initial datum $x$ satisfies $x\ln x-x\in \mathbb
{L}^1\pr{\mathbb{R}^d}$ by assumption. Then, the family $\pr{\Psi_\lambda(X_\lambda)}_{\lambda\in(0,1]}$ is (equi-)bounded in $\mathbb{L}^2\pr{\Omega\times \pp{0,T};\mathcal{H}^{1}\pr{\mathbb{R}^d}}$ i.e.  \begin{equation}\label{Estim_lambda_2}
\sup_{\lambda\in(0,1]}\norm{\Psi_\lambda(X_\lambda)}_{\mathbb{L}^2\pr{\Omega\times \pp{0,T};\mathcal{H}^{1}\pr{\mathbb{R}^d}}}^2\leq C\pr{1+\norm{x}_{\mathbb{L}^2}^2+\int_{\mathbb{R}^d}\pr{x\ln x-x}\pr{\xi}d\xi}.
\end{equation}
\end{enumerate}
\end{proposition}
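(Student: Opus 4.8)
\textit{Proof plan.} I would treat the three assertions in the order in which the approximations were introduced, each one feeding on the $(\varepsilon,\nu,\lambda)$-uniform bounds of Proposition \ref{PropStep2}.

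\textbf{Assertion 1 (the limit $\nu\to0$).} I would realize $X_\lambda$ as the limit of a Cauchy sequence. Fix $\lambda>0$; for $\nu,\nu'\in(0,1]$ set $Z:=X_{\lambda,\nu}-X_{\lambda,\nu'}$, which solves
\begin{equation*}
dZ=\Delta\pr{\tilde{\Psi}_\lambda(X_{\lambda,\nu})-\tilde{\Psi}_\lambda(X_{\lambda,\nu'})}dt-\pr{\nu\tilde{\Psi}_\lambda(X_{\lambda,\nu})-\nu'\tilde{\Psi}_\lambda(X_{\lambda,\nu'})}dt+\tfrac12(\sigma\otimes\sigma)(Z)\,dt+Z\,dW.
\end{equation*}
Applying Itô's formula to $\norm{\cdot}^2_{H^{-1}}$ (the standard, $\nu$-independent norm) and using $\scal{\Delta u,v}_{H^{-1}}=\scal{u,v}_{H^{-1}}-\scal{u,v}_{2}$, the leading term is estimated by $\scal{\Delta(\tilde{\Psi}_\lambda(X_{\lambda,\nu})-\tilde{\Psi}_\lambda(X_{\lambda,\nu'})),Z}_{H^{-1}}\le-\tfrac\lambda2\norm{Z}^2_{2}+C_\lambda\norm{Z}^2_{H^{-1}}$, where I use the strong monotonicity $(\tilde{\Psi}_\lambda(r)-\tilde{\Psi}_\lambda(s))(r-s)\ge\lambda(r-s)^2$, the $\tfrac1\lambda$-Lipschitz bound on $\tilde{\Psi}_\lambda$ and $\norm{\cdot}_{H^{-1}}\le\norm{\cdot}_{2}$; the $\nu$-terms are controlled by $C_\lambda(\nu^2+\nu'^2)\pr{\norm{X_{\lambda,\nu}}^2_{2}+\norm{X_{\lambda,\nu'}}^2_{2}}+\tfrac14\norm{Z}^2_{H^{-1}}$ (using $\tilde{\Psi}_\lambda(0)=0$); the $\sigma\otimes\sigma$ and stochastic terms are handled exactly as in the proof of Proposition \ref{PropStep2} via \eqref{ItoIsom}, \eqref{normSigmaH}, \eqref{PropSigSig} and Burkholder--Davis--Gundy. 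Taking expectations, invoking $\underset{t}{ess\,sup}\,\mathbb{E}\norm{X_{\lambda,\nu}(t)}^2_{2}\le C\norm{x}^2_{2}$ from \eqref{Estim1}, and Gronwall's lemma yield $\mathbb{E}\pp{\sup_{t\le T}\norm{Z(t)}^2_{H^{-1}}}+\lambda\,\mathbb{E}\int_0^T\norm{Z(s)}^2_{2}ds\le C_\lambda(\nu^2+\nu'^2)$. Hence $(X_{\lambda,\nu})_\nu$ is Cauchy in $\mathbb{L}^2(\Omega;C([0,T];H^{-1}))$ and in $\mathbb{L}^2(\Omega\times[0,T];\mathbb{L}^2)$; its limit $X_\lambda$ solves \eqref{Eq_lambda}, by passing to the limit in the variational formulation: $\tilde{\Psi}_\lambda(X_{\lambda,\nu})\to\tilde{\Psi}_\lambda(X_\lambda)$ in $\mathbb{L}^2(\Omega\times[0,T];\mathbb{L}^2)$ by Lipschitz continuity, $\norm{\nu\tilde{\Psi}_\lambda(X_{\lambda,\nu})}_{2}\le\nu(\tfrac1\lambda+\lambda)\norm{X_{\lambda,\nu}}_{2}\to0$, and the noise term converges by Itô's isometry.

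\textbf{Assertion 2 (bounds uniform in $\lambda$).} I would deduce (a) and (b) from the $(\nu,\lambda)$-uniform bounds of Proposition \ref{PropStep2} by lower semicontinuity along $\nu\to0$. For (a): since $\norm{\cdot}^2_{H^{-1}_{\nu_0}}\le\norm{\cdot}^2_{H^{-1}_\nu}$ for $\nu\le\nu_0$, the $H^{-1}$-convergence gives $\mathbb{E}\pp{\sup_t\norm{X_\lambda(t)}^2_{H^{-1}_{\nu_0}}}=\lim_{\nu\to0}\mathbb{E}\pp{\sup_t\norm{X_{\lambda,\nu}(t)}^2_{H^{-1}_{\nu_0}}}\le\liminf_{\nu\to0}\mathbb{E}\pp{\sup_t\norm{X_{\lambda,\nu}(t)}^2_{H^{-1}_\nu}}\le C\norm{x}^2_{\mathcal{H}^{-1}}$, and letting $\nu_0\downarrow0$ (monotone convergence on the Fourier side, $\norm{\cdot}_{H^{-1}_{\nu_0}}\uparrow\norm{\cdot}_{\mathcal{H}^{-1}}$) yields (a) and, in particular, that $X_\lambda$ is $\mathcal{H}^{-1}$-valued. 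For (b): from $\tfrac1{b-a}\int_a^b\mathbb{E}\norm{X_{\lambda,\nu}(s)}^2_{2}ds\le C\norm{x}^2_{2}$ one passes to the limit using the strong $\mathbb{L}^2(\Omega\times[0,T];\mathbb{L}^2)$-convergence and then works at Lebesgue points of $t\mapsto\mathbb{E}\norm{X_\lambda(t)}^2_{2}$, exactly as in item~4 of the proof of Proposition~\ref{PropStep2}.

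\textbf{Assertion 3 (the energy estimate, the crux).} Here I would apply an Itô formula for a convex Lyapunov functional adapted to $\tilde{\Psi}_\lambda$ --- morally $u\mapsto\int_{\mathbb{R}^d}(u\ln u-u)\,d\xi$, but regularized (to accommodate the set-valuedness of $\Psi$ and the degeneracy at $0$) and renormalized so as to vanish to second order at the origin. The renormalization is essential: it keeps the constant $\Psi_\lambda(0)$, which diverges to $-\infty$ as $\lambda\downarrow0$, out of every estimate, and it is exactly why $\nu\to0$ must be taken before $\lambda\to0$. Along \eqref{Eq_lambda}, the drift $\Delta\tilde{\Psi}_\lambda(X_\lambda)$ produces the dissipation $-\int\Psi_\lambda'(X_\lambda)\tilde{\Psi}_\lambda'(X_\lambda)\abs{\nabla X_\lambda}^2d\xi$, which --- because of the defining form $\tilde{\Psi}_\lambda'=\Psi_\lambda'+\lambda\ge\Psi_\lambda'$ --- dominates $-\norm{\nabla\Psi_\lambda(X_\lambda)}^2_{2}=-\norm{\Psi_\lambda(X_\lambda)}^2_{\mathcal{H}^1}$, precisely the quantity to be bounded. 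The noise contribution (the Stratonovich drift correction $\tfrac12(\sigma\otimes\sigma)(X_\lambda)$ together with the second-order Itô term) combines, thanks to the rectification, into a form controlled uniformly in $\lambda$ by $C(1+\norm{X_\lambda}^2_{2})$ plus the functional itself; one uses elementary $\lambda$-uniform bounds on the Yosida approximation, e.g. $0\le\Psi_\lambda'(r)r^2\le r+r^2$ for $r\ge0$ (from $\Psi_\lambda'(r)=\pr{(\mathbb{I}+\lambda\Psi)^{-1}r+\lambda}^{-1}$ and $(\mathbb{I}+\lambda\Psi)^{-1}(1)=1$), together with the summability \eqref{Cond_mu} and the integrability of the $e_k$ --- whereas a plain Itô noise would leave here a term of order $\tfrac1\lambda\norm{X_\lambda}^2_{2}$, not controllable as $\lambda\downarrow0$, which is the obstruction that left the critical case open in \cite{BARBU20151024}. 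Taking expectations removes the martingale term after a stopping-time localization; the initial value of the functional is $\le C(1+\norm{x}^2_{2}+\int_{\mathbb{R}^d}(x\ln x-x)d\xi)$ by the hypothesis on $x$ and by (b); Gronwall then gives \eqref{Estim_lambda_2}. To legitimize the Itô formula --- in particular to have $\Psi_\lambda(X_\lambda)\in\mathcal{H}^1$ --- I would run the computation first at the regularized level (for $X_{\lambda,\nu}^\varepsilon$, or $X_{\lambda,\nu}$), where the elliptic regularization supplies the Sobolev regularity and the identity $\scal{-(\Delta-\nu)\tilde{\Psi}_\lambda(u),\tilde{\Psi}_\lambda(u)}_{2}=\norm{\nabla\tilde{\Psi}_\lambda(u)}^2_{2}+\nu\norm{\tilde{\Psi}_\lambda(u)}^2_{2}$ holds, obtain a bound uniform in $\varepsilon,\nu$, and only then pass to the limit $\varepsilon\to0$, $\nu\to0$ by weak lower semicontinuity of $\norm{\cdot}_{\mathbb{L}^2(\Omega\times[0,T];\mathcal{H}^1)}$. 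The decisive difficulty is precisely this third assertion: three incompatible-looking requirements have to be met at once --- the diverging constant $\Psi_\lambda(0)$ must never enter (forcing the renormalized functional and the order of limits), the noise must be absorbed without a $\tfrac1\lambda$ loss (which is what the Stratonovich rectification buys), and the entropy computation must be justified despite the absence of a priori $H^1$-control on $X_\lambda$ itself (handled by descending to the $\varepsilon,\nu$-regularized equations first). Assertions~1 and~2 are, by comparison, routine once Proposition~\ref{PropStep2} is available.
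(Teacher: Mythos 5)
Your treatment of assertions 1 and 2 follows essentially the paper's own route (Itô's formula for $\norm{\cdot}^2_{H^{-1}}$ applied to $X_{\lambda,\nu}-X_{\lambda,\nu'}$, strong $\lambda$-monotonicity of $\tilde\Psi_\lambda$, Gronwall, then the monotonicity of $\nu\mapsto\norm{\cdot}_{H^{-1}_\nu}$ with monotone convergence and a Lebesgue-point argument), and your plan to justify the entropy computation at the $(\varepsilon,\nu)$-regularized level before passing to the limit by weak lower semicontinuity is sound and, if anything, more explicit than what the paper writes. The gap is in the quantitative closing of assertion 3. The paper applies Itô's formula to $\Phi_\lambda(u)=\int_{\mathbb{R}^d}\pr{j_\lambda(u)+\frac{\lambda}{2}u^2}d\xi$, whose Fréchet derivative is $\Psi_\lambda+\lambda\mathbb{I}$; contrary to your description, this functional does \emph{not} vanish to second order at the origin (its derivative at $0$ is $\Psi_\lambda(0)$), and the diverging constant $\Psi_\lambda(0)$ is not kept out of the estimate --- it enters through $\int\Psi_\lambda(X_\lambda)X_\lambda e_k^2\,d\xi$ with a helpful, negative sign near $X_\lambda=0$. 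After the drift produces $-\norm{\nabla\tilde\Psi_\lambda(X_\lambda)}_2^2$, the right-hand side carries, per mode $k$, the pointwise density $\pr{\Psi_\lambda(r)+\lambda r}r+\tilde\Psi_\lambda'(r)r^2=r\pr{\Psi_\lambda(r)+\frac{r}{\lambda+J_\lambda(r)}}+2\lambda r^2$, and the whole point of the paper's Proposition \ref{PropAux} is the pointwise inequality $\Psi_\lambda(r)+\frac{r}{\lambda+J_\lambda(r)}\leq 2r$ on $\mathbb{R}_+$ (for $\lambda\leq\frac12$), which turns this into a purely quadratic bound and hence closes the estimate with $\norm{X_\lambda}_{\mathbb{L}^2}^2$ alone.

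Your substitute, the ``elementary'' bound $0\leq\Psi_\lambda'(r)r^2\leq r+r^2$, is true but does not do this job: it leaves on the right-hand side the term $\sum_k\mu_k\int_{\mathbb{R}^d}X_\lambda e_k^2\,d\xi$, \emph{linear} in $X_\lambda$. On the unbounded domain $\mathbb{R}^d$ this cannot be dominated by $C\norm{X_\lambda}_{\mathbb{L}^2}^2$, nor by $C\pr{1+\norm{X_\lambda}_{\mathbb{L}^2}^2}$, using only the standing hypotheses on the basis, which control $\norm{e_k}_\infty$ and $\norm{\nabla e_k}_{\mathbb{L}^d}$ in \eqref{Cond_mu} but give no summable control of $\norm{e_k}_{\mathbb{L}^2}$ or $\norm{e_k}_{\mathbb{L}^4}$; and no $\mathbb{L}^1$-type bound on $X_\lambda$ is available. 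The same linear obstruction appears if one estimates the Itô second-order term directly via $\frac{r^2}{\lambda+J_\lambda(r)}\leq(1+\lambda)r$. The only way out is the \emph{combined} estimate: near $r=0$ the quantity $\Psi_\lambda(r)$ is of order $-J_\lambda(0)/\lambda$, very negative, and it is precisely this negativity that absorbs the bounded-but-not-vanishing factor $\frac{r}{\lambda+J_\lambda(r)}\leq 1+\lambda$; showing that the absorption yields exactly $2r$ requires the resolvent bounds $\frac{r+\lambda}{1+\lambda}\leq J_\lambda(r)\leq e^r$ and a short monotonicity argument (the content of Proposition \ref{PropAux} and its appendix proof). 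You correctly sense that the Stratonovich correction and the Itô correction must ``combine'', but you never write down the inequality that realizes the combination, and the bounds you do offer would not let the proof of \eqref{Estim_lambda_2} close.
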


\begin{proof}[Proof of Proposition \ref{PropStep3}]
1. To avoid useless complications, we will simply fix $\lambda>0$ and write $X:=X_{\lambda,\nu}$ and $Y:=X_{\lambda,\nu'}$.
\begin{equation}\label{S3Eq1}
\begin{split}
d\ \pr{X-Y}(t)=&\pr{ \pr{\Delta-\nu}\tilde{\Psi}_\lambda(X(t))-\pr{\Delta-\nu'}\tilde{\Psi}_\lambda(Y(t))}\ dt\\&+\frac{1}{2}\sigma\otimes\sigma\pr{X(t)-Y(t)}\ dt+\pr{\sigma\pr{X(t)-Y(t)}}\ dW^Q(t).
\end{split}
\end{equation}
We write \begin{align*}\pr{ \pr{\Delta-\nu}\tilde{\Psi}_\lambda(x)-\pr{\Delta-\nu'}\tilde{\Psi}_\lambda(y)}=&\pr{\Delta-1}\pr{\tilde{\Psi}_\lambda(x)-\tilde{\Psi}_\lambda(y)}\\&+\pr{1-\nu}\pr{\tilde{\Psi}_\lambda(x)-\tilde{\Psi}_\lambda(y)}-\pr{\nu-\nu'}\tilde{\Psi}_\lambda(y).
\end{align*}
With this in mind, Itô's formula for the process in \eqref{S3Eq1} and the functional $\norm{\cdot}_{H^{-1}}^2$ together with the strict $\lambda$-monotonicity of $\tilde{\Psi}_\lambda$ yields
\begin{align*}
&\norm{(X-Y)(t)}^2_{H^{-1}}+\lambda\int_0^t\norm{X(s)-Y(s)}_{\mathbb{L}^2}^2ds\\[5pt]
&\leq \int_0^t\set{\abs{\scal{\tilde{\Psi}_\lambda(X(s))-\tilde{\Psi}_\lambda(Y(s)),X(s)-Y(s)}_{H^{-1}}}}ds\\[5pt]
&+\int_0^t\set{\abs{\nu-\nu'} \int_0^t\abs{\scal{\tilde{\Psi}_\lambda(Y(s)),X(s)-Y(s)}_{H^{-1}}}}ds\\[5pt]
&+C\int_0^t\norm{X(s)-Y(s)}_{H^{-1}}^2ds+\abs{\int_0^t\scal{X(s)-Y(s),\pr{\sigma(X(s))-\sigma(Y(s))}dW^Q(s)}_{H^{-1}}}
\end{align*}
(Again, the constant $C$ comes from \eqref{ItoIsom} and \eqref{PropSigSig}.) 

Let us explain how to deal with the term $\abs{\scal{\tilde{\Psi}_\lambda(X(s))-\tilde{\Psi}_\lambda(Y(s)),X(s)-Y(s)}_{H^{-1}}}$. 

Using the Lipschitz continuity of $\tilde{\Psi}_\lambda$, one finds a constant $C>0$ (possibly depending on $\lambda>0$ but independent of $t\leq T$ and of $\nu,\nu'$) such that \begin{align*}
\abs{\scal{\tilde{\Psi}_\lambda(x)-\tilde{\Psi}_\lambda(y),x-y}_{H^{-1}}} &\leq \norm{\tilde{\Psi}_\lambda(x)-\tilde{\Psi}_\lambda(y)}_{\mathbb{L}^2}\norm{x-y}_{H^{-1}}  \\[5pt] 
&\leq C\norm{x-y}_{\mathbb{L}^2}\norm{x-y}_{H^{-1}}\\[5pt] 
&\leq \frac{\lambda}{2}\norm{x-y}_{\mathbb{L}^2}^2+C\norm{x-y}_{H^{-1}}^2.
\end{align*}
Using the linear bound $\tilde{\Psi}_\lambda$ and Burkholder-Davis-Gundy inequality to deal with the $W^Q$-term, we get the existence of a constant $C>0$ (possibly depending on $\lambda>0$ but independent of $t\leq T$ and of $\nu,\nu'$) such that, for $0\leq t\leq r\leq T$, 
\begin{equation*}
\begin{split}
&\mathbb{E}\pp{\sup_{0\leq t\leq r}\norm{\pr{X-Y}(t)}_{H^{-1}}^2}+\mathbb{E}\pp{\int_0^r\norm{X(s)-Y(s)}_{\mathbb{L}^2}^2ds}\\[5pt]
&\leq C\mathbb{E}\pp{\int_0^t\norm{X(s)-Y(s)}_{H^{-1}}^2ds}+C\abs{\nu-\nu'}\mathbb{E}\pp{\int_0^r\norm{Y(t)}_{\mathbb{L}^2}^2ds}.
\end{split}
\end{equation*}
Then, thanks to Gronwall's inequality and to the estimates in Proposition \ref{PropStep2} 1., we get 
\begin{equation}\label{S3Eq2}
\mathbb{E}\pp{\sup_{0\leq t\leq T}\norm{\pr{X_{\lambda,\nu}(t)-X_{\lambda,\nu'}(t)}}_{H^{-1}}^2}+\mathbb{E}\pp{\int_0^T \norm{X_{\lambda,\nu}(s)-X_{\lambda,\nu'}(s)}_{\mathbb{L}^2}}\leq C\abs{\nu-\nu'}\norm{x}_{\mathbb{L}^2}.
\end{equation}
The first assertion follows from the completeness of the underlying space.\\

2. To see that the limit $X_\lambda$ belongs to $\mathcal{H}^{-1}$, one proceeds as folows.  We recall that, due to \eqref{Estim1},
\begin{align*}
\mathbb{E}\pp{\sup_{0\leq t\leq T}\norm{X_{\lambda,\nu}(t)}_{H_{\nu}^{-1}}^2}\leq C\norm{x}^2_{H_{\nu}^{-1}}.
\end{align*}
On the other hand, since $\mu\mapsto \norm{\cdot}_{H^{-1}_\mu}$ is non-increasing,  \begin{align*}
\mathbb{E}\pp{\sup_{0\leq t\leq T}\norm{X_{\lambda,\nu}(t)}_{H_{\mu}^{-1}}^2}\leq C\norm{x}^2_{H_{\nu}^{-1}},\ \forall \nu\leq \mu.
\end{align*}
Having fixed $\mu>0$, due to the first assertion,  
\begin{equation}
X_{\lambda,\nu}\rightarrow X_\lambda \textit{ as } \nu\rightarrow 0+ \quad \mbox{ in} \quad  \mathbb{L}^{2}\pr{\Omega;C\pr{\pp{0,T};H^{-1}_\mu}}
\end{equation}
(remember $\norm{\cdot}_{H^{-1}}$ and $\norm{\cdot}_{H^{-1}_\mu}$ are equivalent).  Then, passing to the limit as $\nu\rightarrow0+$, one gets 
\begin{align*}
\mathbb{E}\pp{\sup_{0\leq t\leq T}\norm{X_{\lambda}(t)}_{H_{\mu}^{-1}}^2}\leq C\norm{x}^2_{\mathcal{H}^{-1}}. 
\end{align*}
Finally, let us note that
\[\underset{\mu\rightarrow0+}{\lim}\sup_{0\leq t\leq T}\norm{X_{\lambda}(t)}_{H^{-1}_\mu}=\sup_{\mu>0+}\sup_{0\leq t\leq T}\norm{X_{\lambda}(t)}_{H^{-1}_\mu}=\sup_{0\leq t\leq T}\sup_{\mu>0+}\norm{X_{\lambda}(t)}_{H^{-1}_\mu}=\sup_{0\leq t\leq T}\norm{X_{\lambda}(t)}_{\mathcal{H}^{-1}},\]and, by the monotone convergence theorem, one gets the first inequality in \eqref{Estim_lambda_1}. \\
For the second one, one uses almost sure convergence of (some subsequence of) $X_{\lambda,\nu}$ to $X_\lambda$ combined with Fatou's lemma and the upper estimates in Proposition \ref{PropStep2}.\\
3. Since we are interested in terms like $\abs{\nabla\Psi_\lambda\pr{X_\lambda}}^2$, we introduce
\begin{equation*}
\Phi _{\lambda }(x):=\int_{\mathbb{R}^{d}}\pr{j_{\lambda }\left(
x\left( \xi \right) \right)+\frac{\lambda}{2}\pr{x(\xi)}^2} d\xi ,\ \forall x\in \mathbb{L}^{2}\left( \mathbb{R} 
^{d}\right),
\end{equation*}such that the $\mathbb{L}^2$ Fréchet-derivative of  $\Phi _{\lambda }(x)$ provides 
$\Psi_\lambda(x)+\lambda x$.

We apply It\^{o}'s formula with $\Phi _{\lambda }$ (the computation may be justified as in Step 3) to the $\mathbb{L}^2\pr{\mathbb{R}^d}$-valued process $X_\lambda$ to get 
\begin{equation*}
\begin{split}
&\mathbb{E}\pp{\Phi_\lambda\pr{X_\lambda(t)}}\\[5pt]
&=\mathbb{E}\pp{\Phi_\lambda\pr{x}}+\mathbb{E}\pp{\int_0^t\scal{\Delta\tilde{\Psi}_\lambda\pr{X_\lambda(s)},\tilde{\Psi}_\lambda\pr{X_\lambda(s)}+\Psi_\lambda\pr{0}}_{\mathbb{L}^2}ds}\\[5pt] 
&+\frac{1}{2}\sum_{k\geq 1}\mu_k\mathbb{E}\pp{\int_0^t\int_{\mathbb{R}^d}X_\lambda(s)\pr{\tilde{\Psi}_\lambda\pr{X_\lambda(s)}+\Psi_\lambda(0)}e_k^2d\xi ds}\\[5pt]
&+\frac{1}{2}\sum_{k\geq 1}\mu_k\mathbb{E}\pp{\int_0^t\int_{\mathbb{R}^d}\tilde\Psi_\lambda'\pr{X_\lambda(s)}X_\lambda^2(s)e_kd\xi ds}.
\end{split}
\end{equation*}
Let us now focus on the term $\tilde\Psi_\lambda'\pr{X_\lambda(s)}$.  
Using chain rules, one easily computes 
\begin{equation}
\tilde\Psi_\lambda'\pr{r}=\lambda+\frac{1}{\lambda+J_\lambda(r)}
\end{equation}

where $J_\lambda(r)=\pr{\mathbb{I}+\lambda\Psi}^{-1}(r)$.  

Using \eqref{normL2}, it follows that 
\begin{equation}\label{S3Eq3}
\begin{split}
&\mathbb{E}\pp{\Phi_\lambda\pr{X_\lambda(t)}}+\mathbb{E}\pp{\int_0^t\norm{\nabla\tilde{\Psi}_\lambda\pr{X_\lambda(s)}}_{\mathbb{L}^2}^2ds}\\[5pt]
&= \mathbb{E}\pp{\Phi_\lambda\pr{x}}+\frac{1}{2}\sum_{k\geq 1}\mu_k \big\lbrace\mathbb{E}\pp{\int_0^t\int_{\mathbb{R}^d}\pr{\Psi_{\lambda}\pr{X_\lambda(s)}+\lambda X_{\lambda}(s)}X_\lambda(s)e_k^2d\xi ds}\\[5pt]
&+\mathbb{E}\pp{\int_0^t\int_{\mathbb{R}^d}\frac{X_\lambda^2(s)}{\lambda+J_\lambda\pr{X_\lambda(s)}}e_k^2d\xi\ ds}+\mathbb{E}\pp{\int_0^t\lambda\norm{X_\lambda(s)e_k^2}_{\mathbb{L}^2}^2ds} \big\rbrace.
\end{split}
\end{equation}

We can prove that 
\begin{proposition}\label{PropAux}The application $\mathbb{R}_+\ni r\mapsto \Psi_\lambda(r)+\frac{r}{\lambda+J_\lambda(r)}-2r$ is non-positive (for $\lambda\leq\frac{1}{2}$).
\end{proposition}

The proof of Proposition \ref{PropAux} relies on some technical computation and will be relegated to the Appendix.

Going back to \eqref{S3Eq3}, one has for $\lambda \le 1/2$
\begin{equation}
\label{S3Eq4}
\begin{split}
&\mathbb{E}\pp{\Phi_\lambda\pr{X_\lambda(t)}}+\mathbb{E}\pp{\int_0^t\norm{\nabla\tilde{\Psi}_\lambda\pr{X_\lambda(s)}}_{\mathbb{L}^2}^2ds}\\[5pt]
&\leq \mathbb{E}\pp{\Phi_\lambda\pr{x}}+\sum_{k\geq 1}\mu_k\norm{e_k}_{\infty}^2(\lambda+1)\mathbb{E}\pp{\int_0^t\norm{X_\lambda(s)}_{\mathbb{L}^2}^2ds}.
\end{split}
\end{equation}
Our assertion follows thanks to \eqref{Estim_lambda_1} (b) by noting that $\Phi_\lambda$ is lower bounded and $j_\lambda(r)\leq j(r)\leq r\ln r-r$, for $\lambda>0$.
\end{proof}

\textbf{Step 6.} The passage to the limit as $\lambda\rightarrow 0+$ 

As a consequence of the estimates form the previous step, 
%we have the following 
%\begin{enumerate}
%\item 
the following weak convergences hold true.
\begin{align}\label{WeakConv_1}
\begin{cases}
X_\lambda{\underset{\lambda\longrightarrow 0}{\rightarrow}}X,\ \textnormal{weakly-* in } \mathbb{L}^\infty\pr{\pp{0,T};\mathbb{L}^2\pr{\Omega;\mathcal{H}^{-1}}};\\
X_\lambda{\underset{\lambda\longrightarrow 0}{\rightarrow}}X,\ \textnormal{weakly in }\mathbb{L}^2\pr{\pp{0,T}\times\Omega;\mathbb{L}^{2}};\\
\tilde\Psi_\lambda\pr{X_\lambda}{\underset{\lambda\longrightarrow 0}{\rightarrow}}\eta,\ \textnormal{weakly in }\mathbb{L}^2\pr{\pp{0,T}\times\Omega;\mathcal{H}^{1}}.
\end{cases}
\end{align}

Using the linearity and boundedness in $\mathcal{H}^{-1}$ of $\sigma\otimes\sigma$, one has a (weak) sense of the limiting equation
\begin{align}\label{EqLimit}
\left\langle X\left( t\right) ,e_{j}\right\rangle
_{2}=\left\langle x,e_{j}\right\rangle _{2}-\int_{0}^{t}\int_{%
\mathbb{R}^{d}}\left\langle \nabla \eta \left( X\right) ,\nabla
e_{j}\right\rangle _{\mathbb{R}^{d}}d\xi ds+\left\langle \int_{0}^{t}X\left( s\right)\circ dW _{s},e_{j}\right\rangle _{2}.\end{align}

\textbf{Step 7.} To conclude, one still has to prove that $\eta=\Psi(X)$ in a $\mathbb{P}\times\mathcal{L}eb$ almost sure sense on $\Omega\times \pp{0,T}\times \mathbb{R}^d$. 

This follows from the following result.
\begin{proposition}\label{PropStep5}
The following assertion holds true.
\begin{align}
\label{EqStar}
\underset{\lambda\rightarrow0+}{\lim\sup}\ \mathbb{E}\pp{\int_0^T\int_{\mathbb{R}^d}\tilde{\Psi}_\lambda\pr{X_\lambda(t)(\xi)}X_\lambda(t)(\xi) d\xi dt}\leq \mathbb{E}\pp{\int_0^T\int_{\mathbb{R}^d}\eta(t)(\xi)X(t)(\xi) d\xi dt}.
\end{align}
\end{proposition}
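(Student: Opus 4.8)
The inequality \eqref{EqStar} is the standard "energy identification" step: one wants to pass to the limit in the quadratic (non-weakly-continuous) term $\langle \tilde\Psi_\lambda(X_\lambda),X_\lambda\rangle$, and the natural tool is an It\^o formula / energy identity for $\frac12\norm{\cdot}^2_{\mathcal{H}^{-1}}$ applied to $X_\lambda$, combined with weak lower semicontinuity of the noise term and the convergences in \eqref{WeakConv_1}. I would proceed as follows.

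First I would write the It\^o formula for $\frac12\norm{X_\lambda(t)}^2_{\mathcal{H}^{-1}}$ (legitimate since $X_\lambda$ solves an equation with the operator $\Delta$ in the Gelfand triple $\mathbb{L}^2\subset\mathcal{H}^{-1}\subset(\mathbb{L}^2)^*$, using the pairing \eqref{DualHsH-s} with $s=1$ so that $\langle\Delta\tilde\Psi_\lambda(X_\lambda),X_\lambda\rangle_{\mathcal{H}^{-1}}=-\langle\nabla\tilde\Psi_\lambda(X_\lambda),\nabla(-\Delta)^{-1}X_\lambda\rangle$... more conveniently $\langle\Delta u,v\rangle_{\mathcal{H}^{-1}}=-\langle u,v\rangle_{\mathbb{L}^2}$), taking expectations to kill the martingale term:
\begin{equation*}
\tfrac12\mathbb{E}\norm{X_\lambda(T)}^2_{\mathcal{H}^{-1}}+\mathbb{E}\!\int_0^T\!\!\langle\tilde\Psi_\lambda(X_\lambda),X_\lambda\rangle_{\mathbb{L}^2}\,dt=\tfrac12\norm{x}^2_{\mathcal{H}^{-1}}+\tfrac12\mathbb{E}\!\int_0^T\!\!\langle(\sigma\otimes\sigma)(X_\lambda),X_\lambda\rangle_{\mathcal{H}^{-1}}dt+\tfrac12\mathbb{E}\!\int_0^T\!\norm{\sigma(X_\lambda)}^2_{\mathcal{L}_2(Q^{1/2}\mathcal{H}^{-1};\mathcal{H}^{-1})}dt.
\end{equation*}
By the Stratonovich rectification \eqref{strat}--\eqref{SigSig}, the last two terms on the right cancel exactly (this is precisely the point of the Stratonovich correction, and the reason the estimate \eqref{S3Eq4} was attainable), leaving
\begin{equation*}
\mathbb{E}\!\int_0^T\!\langle\tilde\Psi_\lambda(X_\lambda(t)),X_\lambda(t)\rangle_{\mathbb{L}^2}\,dt=\tfrac12\norm{x}^2_{\mathcal{H}^{-1}}-\tfrac12\mathbb{E}\norm{X_\lambda(T)}^2_{\mathcal{H}^{-1}}.
\end{equation*}
Then I would take $\limsup_{\lambda\to0}$: the right-hand side's $\limsup$ is $\le \tfrac12\norm{x}^2_{\mathcal{H}^{-1}}-\tfrac12\liminf_\lambda\mathbb{E}\norm{X_\lambda(T)}^2_{\mathcal{H}^{-1}}$, and by the weak-$*$ convergence $X_\lambda\to X$ in $\mathbb{L}^\infty([0,T];\mathbb{L}^2(\Omega;\mathcal{H}^{-1}))$ (hence weak convergence at a.e.\ time, or after integrating a tail in $t$) together with weak lower semicontinuity of $\norm{\cdot}^2_{\mathcal{H}^{-1}}$, $\liminf_\lambda\mathbb{E}\norm{X_\lambda(T)}^2_{\mathcal{H}^{-1}}\ge\mathbb{E}\norm{X(T)}^2_{\mathcal{H}^{-1}}$.

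Second, I would perform the symmetric computation for the limit equation \eqref{EqLimit}: applying the same $\frac12\norm{\cdot}^2_{\mathcal{H}^{-1}}$-It\^o formula to $X$ (with the operator $u\mapsto\Delta\eta$ and the same $\sigma$, so the noise terms cancel again by Stratonovich) gives
\begin{equation*}
\tfrac12\mathbb{E}\norm{X(T)}^2_{\mathcal{H}^{-1}}+\mathbb{E}\!\int_0^T\!\langle\nabla\eta(t),\nabla(-\Delta)^{-1}X(t)\rangle_{\mathbb{L}^2}\,dt=\tfrac12\norm{x}^2_{\mathcal{H}^{-1}},
\end{equation*}
i.e.\ $\mathbb{E}\int_0^T\!\int_{\mathbb{R}^d}\eta(t)(\xi)X(t)(\xi)\,d\xi\,dt=\tfrac12\norm{x}^2_{\mathcal{H}^{-1}}-\tfrac12\mathbb{E}\norm{X(T)}^2_{\mathcal{H}^{-1}}$, using $\langle\Delta\eta,X\rangle_{\mathcal{H}^{-1}}=-\langle\eta,X\rangle_{\mathbb{L}^2}$ and that $\eta\in\mathbb{L}^2([0,T]\times\Omega;\mathcal{H}^1)$, $X\in\mathbb{L}^2([0,T]\times\Omega;\mathbb{L}^2)$ make the pairing a genuine integral. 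Combining the two displays gives exactly \eqref{EqStar}. Once \eqref{EqStar} is in hand, the usual Minty--Browder monotonicity trick (using that $\tilde\Psi_\lambda$ is monotone, that $X_\lambda\to X$ and $\tilde\Psi_\lambda(X_\lambda)\to\eta$ weakly in $\mathbb{L}^2$, and lower semicontinuity of the convex potential $\int j$) identifies $\eta=\Psi(X)=\ln X$ a.e., together with positivity of $X$.

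\textbf{Main obstacle.} The delicate point is rigorously justifying the It\^o formula for $\frac12\norm{\cdot}^2_{\mathcal{H}^{-1}}$ on $X_\lambda$ and especially on the \emph{limit} process $X$ — for $X$ one does not a priori know $X\in\mathbb{L}^2([0,T]\times\Omega;\mathbb{L}^2)$ with the right duality pairing against $\Delta\eta$ being the It\^o correction-free object, so one must check $X$ lies in a suitable Gelfand triple with $\eta\in\mathbb{L}^2(\mathcal{H}^1)$ as the "$V^*$"-valued drift and invoke a variational It\^o formula (e.g.\ the Krylov--Rozovskii / Pardoux formula as in \cite[Theorem 4.2.5]{Prevot-Rockner}); the estimates \eqref{Estim_lambda_1}, \eqref{Estim_lambda_2} are exactly what guarantee this. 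A secondary subtlety is that the weak-$*$ convergence in $\mathbb{L}^\infty_t\mathbb{L}^2_\omega\mathcal{H}^{-1}$ only directly controls time-averages, so to get $\liminf_\lambda\mathbb{E}\norm{X_\lambda(T)}^2_{\mathcal{H}^{-1}}\ge\mathbb{E}\norm{X(T)}^2_{\mathcal{H}^{-1}}$ one should either argue at Lebesgue points of $t\mapsto\mathbb{E}\norm{X(t)}^2_{\mathcal{H}^{-1}}$ and use the a.e.\ continuity built into $C_{\mathbb{P}}([0,T];\mathcal{H}^{-1})$, or integrate \eqref{EqStar}'s identity over a shrinking interval $[T-\delta,T]$ and let $\delta\to0$ afterwards; both are routine but must be stated carefully.
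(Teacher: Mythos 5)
Your overall skeleton (It\^o's formula for $\tfrac12\norm{\cdot}^2_{\mathcal{H}^{-1}}$ applied to $X_\lambda$, weak lower semicontinuity of the norm, then the symmetric identity for the limit equation \eqref{EqLimit}, and subtraction) is the same as the paper's. However, there is a genuine error at the heart of your computation: the claimed exact cancellation of $\tfrac12\scal{\pr{\sigma\otimes\sigma}(X_\lambda),X_\lambda}_{\mathcal{H}^{-1}}$ against $\tfrac12\norm{\sigma(X_\lambda)}^2_{\mathcal{L}_2\pr{Q^{1/2}\mathcal{H}^{-1};\mathcal{H}^{-1}}}$ does \emph{not} hold. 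The operator $\sigma\otimes\sigma$ actually used in the equation is the one fixed by the $\mathbb{L}^2$ Riesz identification, namely $\pr{\sigma\otimes\sigma}(x)=\sum_k\mu_k e_k^2x$ (see \eqref{SigSigDef}); with this choice the identity $\scal{\pr{\sigma\otimes\sigma}(x),x}_{\mathbb{H}}=\norm{\sigma(x)}^2_{\mathcal{L}_2\pr{Q^{1/2}\mathcal{H}^{-1};\mathbb{H}}}$ is valid only for $\mathbb{H}=\mathbb{L}^2$ (second line of \eqref{PropSigSig}), which is what makes the $\mathbb{L}^2$-energy estimate \eqref{S3Eq4} work. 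In $\mathcal{H}^{-1}$ one has $\sum_k\mu_k\scal{e_k^2x,x}_{\mathcal{H}^{-1}}\neq\sum_k\mu_k\norm{e_kx}^2_{\mathcal{H}^{-1}}$ because multiplication by $e_k$ is not self-adjoint for the $\mathcal{H}^{-1}$ inner product. Consequently your first display is false as an identity, and both noise terms survive on the right-hand side of the $\mathcal{H}^{-1}$ It\^o formula, exactly as in the paper's version of that computation.

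This is not a cosmetic point: the surviving terms $\sum_k\mu_k\scal{X_\lambda,e_k^2X_\lambda}_{\mathcal{H}^{-1}}$ and $\sum_k\mu_k\norm{X_\lambda e_k}^2_{\mathcal{H}^{-1}}$ are quadratic in $X_\lambda$, so the weak convergences \eqref{WeakConv_1} are insufficient to pass to the limit in them. The bulk of the paper's proof of this proposition is devoted precisely to this obstruction: one first shows that for a bounded set $K\supset\mathrm{supp}\,e_k$ the localizations $X_\lambda\mathbf{1}_K$ form a Cauchy family in $\mathbb{L}^2\pr{\Omega\times\pp{0,T};\mathcal{H}^{-1}}$ as $\lambda\rightarrow0+$ (via an It\^o estimate on $X_{\lambda,\nu}^\varepsilon-X_{\lambda',\nu}^\varepsilon$ multiplied by $\mathbf{1}_K$, using the bound \eqref{Estim_lambda_2} and Poincar\'e's inequality on $K$), which yields the \emph{strong} $\mathcal{H}^{-1}$-convergences of $X_\lambda e_k$ and $X_\lambda e_k^2$ recorded in \eqref{Estim++}; only then can one identify the limits of the quadratic noise terms and conclude by Fatou's lemma and the It\^o formula for the limit equation. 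Your proposal omits this step entirely because the erroneous cancellation hides the need for it, so as written the argument does not close. (A secondary consequence: the exact identity you derive would give a limit, not merely a $\limsup$ inequality; the paper only obtains an inequality, after Fatou and after discarding the non-negative monotone terms.)
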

\begin{proof}
First, let us note that if $K$ is a bounded subset of $\mathbb{R}^d$,  and $x\in \mathbb{L}^2\pr{\mathbb{R}^d}$,  non-negative-valued, then $x\mathbf{1}_K\in \mathcal{H}^{-1}$. Indeed, one begins with writing $x(\xi)\mathbf{1}_K(\xi)=x(\xi)\mathbf{1}_{x(\xi)\leq 1,\xi\in K}+x(\xi)\mathbf{1}_{x(\xi)>1,\xi\in K}$.  Then, with $p=\frac{2d}{d+2}$, 
\begin{align*}\int_{\mathbb{R}^d}\pr{x(\xi)\mathbf{1}_K(\xi)}^pd\xi \leq \int_{\mathbb{R}^d}\pr{\mathbf{1}_{\xi\in K}+x^2(\xi)\mathbf{1}_{x(\xi)>1}}d\xi\leq \mathcal{L}eb(K)+\norm{x}_2^2<\infty.\end{align*}As such, $x\mathbf{1}_K\in \mathbb{L}^{\frac{2d}{d+2}}\subset\mathcal{H}^{-1}$. As a consequence, if $Y\in\mathbb{L}^2\pr{\Omega\times\pp{0,T};\mathbb{L}^2\pr{\mathbb{R}^d}}$, then $Y\mathbf{1}_K\in\mathbb{L}^2\pr{\Omega\times\pp{0,T};\mathcal{H}^{-1}}$.\\
Next, for $k\geq 1$, let $K$ be a Lebesgue measure-continuous bounded \footnote{Note that by the density argument, we may assume $e_k \in \mathcal{S}_0 (\mathbb{R}^d) \cap \mathcal{D}(\mathbb{R}^d)$ with the notations from \cite{BCD_2011}.} set (i.e. $\mathcal{L}eb\pr{\partial K}=0$) 
containing the support of $e_k$. 

 Then, if $x\in\mathbb{L}^2\pr{\mathbb{R}^d}\cap\mathcal{H}^{-1}$, then $x\mathbf{1}_K$ is $\mathcal{H}^{-1}$-valued and \begin{equation}\label{Estim+}
\scal{x \mathbf{1}_K,e_k}_{\mathcal{H}^{-1}}=-\scal{x \mathbf{1}_K,\pr{-\Delta}^{-1}e_k}_{2}=-\scal{x,\pr{-\Delta}^{-1}e_k}_{2}=\scal{x,e_k}_{\mathcal{H}^{-1}}
\end{equation}
We claim that $X_\lambda\mathbf{1}_K$ converges strongly in $\mathbb{L}^2\pr{\Omega\times \pp{0,T};\mathcal{H}^{-1}}$ to $X\mathbf{1}_K$.

Let us further note that \begin{enumerate}
\item If $x\in \mathbb{L}^2\pr{\mathbb{R}^d}\cap H^{-1}$ and $y \in \mathbb{L}^2(\mathbb{R}^d)$, then 
\begin{equation*}
\scal{x\mathbf{1}_K,y}_{H^{-1}_\mu}=\scal{\pr{\mu\mathbb{I}-\Delta}^{-1}\pr{\mathbf{1}_Kx},y}_2=\int_K\pr{\mu\mathbb{I}-\Delta}^{-1}x(\xi)y(\xi)d\xi.
\end{equation*}

\item If $x^n$ converges to $x$ strongly in $\mathbb{L}^2\pr{\mathbb{R}^d}$, then, by the same argument seen before, $(x^n-x)\mathbf{1}_K$ converges to $0$ in $\mathbb{L}^{\frac{2d}{d+2}}$ (hence in the larger spaces $\mathcal{H}^{-1}\subset {H}^{-1}$.)
\item The same applies to the convergence of  $X^n$ to $X$ strongly in $\mathbb{L}^2\pr{\Omega\times\pp{0,T}\times\mathbb{R}^d;\mathbb{R}}$ leading to $(X_n-X)\mathbf{1}_K$ converging to $0$ strongly in $\mathbb{L}^2\pr{\Omega\times \pp{0,T}; \mathbb{L}^{\frac{2d}{d+2}}}$;
\end{enumerate}
One applies Itô's formula in $H_{\nu}^{-1}$ to $\pr{X_{\lambda,\nu}^\varepsilon-X_{\lambda',\nu}^\varepsilon}\mathbf{1}_K$. 

In this framework, the term $-A_\nu^\varepsilon\pr{X_{\lambda,\nu}^\varepsilon}+A_\nu^\varepsilon\pr{X_{\lambda',\nu}^\varepsilon}$ is consistent in $\mathbb{L}^2\pr{\mathbb{R}^d}$ such that $\pr{A_\nu^\varepsilon\pr{X_{\lambda,\nu}^\varepsilon}+A_\nu^\varepsilon\pr{X_{\lambda',\nu}^\varepsilon}}\mathbf{1}_K$ belongs to $\mathbb{L}^{\frac{2d}{d+2}}\subset\mathcal{H}^{-1}\subset H^{-1}$. One gets 
\begin{align*}
&\mathbb{E}\pp{\norm{\pr{X_{\lambda,\nu}^\varepsilon(t)-X_{\lambda',\nu}^\varepsilon(t)}\mathbf{1}_K}^2_{H^{-1}_\nu}}\\[5pt]
&+\mathbb{E}\pp{\int_0^t\int_K\pp{\tilde{\Psi}_\lambda\pr{\mathbb{J}_\varepsilon\pr{X_{\lambda,\nu}^\varepsilon(s)}}-\tilde{\Psi}_{\lambda'}\pr{\mathbb{J}_\varepsilon\pr{X_{\lambda',\nu}^\varepsilon(s)}}}\pr{X_{\lambda,\nu}^\varepsilon(s)-X_{\lambda',\nu}^\varepsilon}d\xi\ ds}\\[5pt]
&\leq C\mathbb{E}\pp{\int_0^t\norm{\pr{X_{\lambda,\nu}^\varepsilon(s)-X_{\lambda',\nu}^\varepsilon(s)}\mathbf{1}_K}^2_{H^{-1}_\nu}ds}
\end{align*}
As a consequence, by integrating this on a time interval $\pp{a,b}\subset \pp{0,T}$ and passing to the limit as $\varepsilon\rightarrow 0+$ then as $\nu\rightarrow 0+$ (each of the envolved processes belonging to $\mathbb{L}^2$), one gets the inequality below at every Lebesgue point (hence $\mathbb{P}$-a.s. )

\begin{eqnarray*}
&&\mathbb{E}\frac{1}{2}\left\Vert X_{\lambda }(t)\mathbf{1}_{K}-X_{\lambda
^{\prime }}(t)\mathbf{1}_{K}\right\Vert _{\mathcal{H}^{-1}}^{2}+\mathbb{E}%
\int_{0}^{t}\int_{K}\left( \Psi _{\lambda }\left( X_{\lambda }(s)\right) -\Psi
_{\lambda ^{\prime }}\left( X_{\lambda ^{\prime }}(s)\right) \right) \left(
X_{\lambda }(s)-X_{\lambda ^{\prime }}(s)\right) d\xi ds \\
&&+\mathbb{E}\int_{0}^{t}\int_{K}\left( \lambda X_{\lambda }(s)-\lambda
^{\prime }X_{\lambda ^{\prime }}(s)\right) \left( X_{\lambda }(s)-X_{\lambda
^{\prime }}(s)\right) d\xi ds \\
&\leq &C\mathbb{E}\int_{0}^{t}\left\Vert X_{\lambda }(s)\mathbf{1}%
_{K}-X_{\lambda ^{\prime }}(s)\mathbf{1}_{K}\right\Vert _{\mathcal{H}%
^{-1}}^{2}ds.
\end{eqnarray*}

Since we can write%
\begin{eqnarray*}
&&\left( \Psi _{\lambda }\left( X_{\lambda }\right) -\Psi _{\lambda ^{\prime
}}\left( X_{\lambda ^{\prime }}\right) \right) \left( X_{\lambda
}-X_{\lambda ^{\prime }}\right)  \\[5pt]
&=&\left( \Psi \left( J_{\lambda }\left( X_{\lambda }\right) \right) -\Psi
\left( J_{\lambda ^{\prime }}\left( X_{\lambda ^{\prime }}\right) \right)
\right)  \left( J_{\lambda }\left( X_{\lambda }\right) -J_{\lambda ^{\prime
}}\left( X_{\lambda ^{\prime }}\right) +\lambda \Psi _{\lambda }\left(
X_{\lambda }\right) -\lambda ^{\prime }\Psi _{\lambda ^{\prime }}\left(
X_{\lambda ^{\prime }}\right) \right)  \\[5pt]
&\geq & - \frac{\left( \lambda +\lambda ^{\prime }\right)}{2} \left( \left\vert \Psi
_{\lambda }\left( X_{\lambda }\right) \right\vert ^{2}+\left\vert \Psi
_{\lambda ^{\prime }}\left( X_{\lambda ^{\prime }}\right) \right\vert
^{2}\right) ,
\end{eqnarray*}%
we get by Gronwall's lemma that%
\begin{eqnarray*}
&&\mathbb{E}\left\Vert X_{\lambda }(t)\mathbf{1}_{K}-X_{\lambda ^{\prime }}(t)%
\mathbf{1}_{K}\right\Vert _{\mathcal{H}^{-1}}^{2} \\
&\leq &\left( \lambda +\lambda ^{\prime }\right) \mathbb{E}%
\int_{0}^{t}\int_{K}\left( \left\vert \Psi _{\lambda }\left( X_{\lambda
}(s)\right) \right\vert ^{2}+\left\vert \Psi _{\lambda ^{\prime }}\left(
X_{\lambda ^{\prime }}(s)\right) \right\vert ^{2}\right) d\xi ds \\
&&+\left( \lambda +\lambda ^{\prime }\right) \mathbb{E}\int_{0}^{t}\int_{K}%
\left( \left\vert X_{\lambda }(s)\right\vert ^{2}+\left\vert X_{\lambda
^{\prime }}(s)\right\vert ^{2}\right) d\xi ds.
\end{eqnarray*}

Keeping in mind that 
\[
\mathbb{E}\int_{0}^{t}\int_{K}\left( \left\vert \Psi _{\lambda }\left(
X_{\lambda }(s)\right) \right\vert ^{2}+\left\vert \Psi _{\lambda ^{\prime
}}\left( X_{\lambda ^{\prime }}(s)\right) \right\vert ^{2}\right) d\xi ds<C
\]%
follows from the Poincar\'{e} inequality and (\ref{Estim_lambda_2}),  we get thus that by (\ref{Estim_lambda_1}) 
\[
\mathbb{E}\left\Vert X_{\lambda }(s)\mathbf{1}_{K}-X_{\lambda ^{\prime }}(s)%
\mathbf{1}_{K}\right\Vert _{\mathcal{H}^{-1}}^{2}\underset{\lambda ,\lambda
^{\prime }\rightarrow 0}{\longrightarrow }0.
\]

 In particular,  combined with \eqref{Estim+}, this implies that \begin{equation}\begin{cases}
\scal{X_\lambda,e_k}_{\mathcal{H}^{-1}} \textnormal{ converges strongly in }\mathbb{L}^2\pr{\Omega\times \pp{0,T};\mathbb{R}} \textnormal{ to }\scal{X,e_k}_{\mathcal{H}^{-1}};\\[5pt]
X_\lambda e_k \textnormal{ converges strongly in }\mathbb{L}^2\pr{\Omega\times \pp{0,T};\mathcal{H}^{-1}} \textnormal{ to }X e_k.
\\[5pt]
X_\lambda e_k^2 \textnormal{ converges strongly in }\mathbb{L}^2\pr{\Omega\times \pp{0,T};\mathcal{H}^{-1}} \textnormal{ to }X e_k^2.
\end{cases}\label{Estim++}
\end{equation}

In order to prove (\ref{EqStar}) we shall first apply the It\^{o} formula
with $u\longmapsto \frac{1}{2}\left\Vert u\right\Vert _{\mathcal{H}^{-1}}^{2}
$ to the process $X_{\lambda }$ and we get 
\begin{eqnarray*}
&&\frac{1}{2}\mathbb{E}\left\Vert X_{\lambda }(t)\right\Vert _{\mathcal{H}%
^{-1}}^{2}+\mathbb{E}\int_{0}^{t}\int_{\mathbb{R}^{d}}\Psi _{\lambda
}\left( X_{\lambda }(s)\right) X_{\lambda }(s)d\xi ds+\lambda \mathbb{E}%
\int_{0}^{t}\int_{\mathbb{R}^{d}}\left\vert X_{\lambda }(s)\right\vert
^{2}d\xi ds \\[5pt]
&\leq &\frac{1}{2}\mathbb{E}\left\Vert x\right\Vert _{\mathcal{H}^{-1}}^{2}+C%
\mathbb{E}\int_{0}^{t}\sum_{k=1}^{\infty }\mu _{k}\left\langle X_{\lambda
}\left( s\right) ,e_{k}^{2}X_{\lambda}\left( s\right) \right\rangle _{\mathcal{H}%
^{-1}}ds \\[5pt]
&&+C\mathbb{E}\int_{0}^{t}\sum_{k=1}^{\infty }\mu _{k}\left\vert
X_{\lambda}\left( s\right) e_{k}\right\vert _{\mathcal{H}^{-1}}^{2}ds.
\end{eqnarray*}

By using the Fatou's lemma in the first term and (\ref{Estim++}) in the last part
we get%
\begin{eqnarray*}
&&\frac{1}{2}\mathbb{E}\left\Vert X(t)\right\Vert _{\mathcal{H}^{-1}}^{2}+%
\underset{\lambda \rightarrow 0}{\lim \inf }  \mathbb{E}\int_{0}^{t}\int_{%
\mathbb{R}^{d}}\Psi _{\lambda }\left( X_{\lambda }(s)\right) X_{\lambda }(s)d\xi ds
\\[5pt]
&\leq &\frac{1}{2}\mathbb{E}\left\Vert x\right\Vert _{\mathcal{H}^{-1}}^{2}+C%
\mathbb{E}\int_{0}^{t}\sum_{k=1}^{\infty }\mu _{k}\left\langle X\left(
s\right) ,e_{k}^{2}X\left( s\right) \right\rangle _{\mathcal{H}^{-1}}ds \\[5pt]
&&+C\mathbb{E}\int_{0}^{t}\sum_{k=1}^{\infty }\mu _{k}\left\vert X\left(
s\right) e_{k}\right\vert _{\mathcal{H}^{-1}}^{2}ds.
\end{eqnarray*}

On the other hand, by appling the It\^{o} formula to the solution verifying (\ref{EqLimit}) with
the same norm and combining with the previous relation we get (\ref{EqStar}%
).

\end{proof}

We continue the proof of our main result. For simplicity, let us denote by $\bar\Psi_\lambda(x):=\Psi_\lambda(x)+\lambda x$. 
We consider $\phi_N$ a non-decreasing sequence of infinitely differentiable, $\pp{0,1}$-valued functions such that $\mathbf{1}_{B_N}\leq \phi_N\leq \mathbf{1}_{B_{N+1}}$ for every $N\geq 1$.  

The convexity of $r\mapsto \tilde{j}_\lambda(r):=j_\lambda(r)+\frac{\lambda}{2}r^2$ yields 
\begin{equation*}
\pr{\Psi_\lambda(x)+\lambda x }(x-u)\geq \tilde{j}_\lambda(x)-\tilde{j}_\lambda(u)
\end{equation*}
for every $\pr{x,u}$. 

By applying this inequality to the couple $(X_\lambda, U)$ 
(for a process $U\in \mathbb{L}^2\pr{\Omega\times \pp{0,T};\mathbb{L}^{2,loc}\pr{\mathbb{R}^d}}$) and 
by multiplying with the non-negative $\phi_N$ (for $N$ fixed, for the time being), and by integrating, it follows that\begin{equation}\label{Estim_1}\begin{split}
&\mathbb{E}\pp{\int_0^T\int_{\mathbb{R}^d}\bar{\Psi}_\lambda\pr{X_\lambda(t,\xi)}\pr{X_\lambda(t,\xi)-U(t,\xi)}\phi_N(\xi)d\xi dt}\\[5pt]
&\geq \mathbb{E}\pp{\int_0^T\int_{\mathbb{R}^d}\pr{\tilde j_\lambda\pr{X_\lambda(t,\xi)}-\tilde j_\lambda(U(t,\xi)}\phi_N(\xi)d\xi dt}.
\end{split}\end{equation}
The local integrability of $U$ guarantees the consistency of the left-hand term.  The function $j_\lambda(r)$ is non-positive as $r\leq 1$ and bounded from below by $-1$ if $ 0 \le r \le 1$. It is upper-bounded by $r\ln r$, being sub-quadratic when $r\geq 1$. The bounded support of $\phi_N$ then guarantees consistency of the right-hand term.\\
Again by $0 \le  \phi_N \le 1$ and $\tilde{\Psi}_\lambda(x)x\geq 0$, it follows, from \eqref{Estim_1} that
\begin{equation}\label{Estim_2}\begin{split}
\underset{\lambda\rightarrow 0+}{\lim\inf}\ 
\mathbb{E}&\pp{\int_0^T\int_{\mathbb{R}^d}\tilde{\Psi}_\lambda\pr{X_\lambda(t,\xi)}X_\lambda(t,\xi)d\xi dt}\\[5pt]
&\geq\underset{\lambda\rightarrow 0+}{\lim\inf}\ 
\mathbb{E}\pp{\int_0^T\int_{\mathbb{R}^d}\bar{\Psi}_\lambda\pr{X_\lambda(t,\xi)}X_\lambda(t,\xi)\phi_N(\xi)d\xi dt}\\[5pt]
&\geq \underset{\lambda\rightarrow 0+}{\lim\inf}\ \Big\lbrace \mathbb{E}\pp{\int_0^T\int_{\mathbb{R}^d}\bar{\Psi}_\lambda\pr{X_\lambda(t,\xi)}U(t,\xi)\phi_N(\xi)d\xi dt}\\[5pt]
&+ \mathbb{E}\pp{\int_0^T\int_{\mathbb{R}^d}\tilde j_\lambda\pr{X_\lambda(t,\xi)}\phi_N(\xi)d\xi dt}-\mathbb{E}\pp{\int_0^T\int_{\mathbb{R}^d}\tilde j_\lambda(U(t,\xi))\phi_N(\xi)d\xi dt}\Big\rbrace.
\end{split}\end{equation}
We designate by $I^k_\lambda$ indexed by $k\in\set{1,2,3}$ the three integral terms appearing on the right side. 
\begin{enumerate}
\item For the first term $I^1_\lambda$ one proceeds as follows. 

We recall the boundedness in $\mathbb{L}^2\pr{\Omega\times\pp{0,T};\mathbb{L}^2\pr{\mathbb{R}^d}}$ of $\nabla \tilde\Psi_\lambda\pr{X_\lambda}=\nabla \bar\Psi_\lambda\pr{X_\lambda}$ and the compact support of $\phi_N$ to deduce, by invoking Poincaré's inequality on the bounded open set $B_{N+1}$, that \[\lim_{\lambda\rightarrow 0+}I_\lambda^1=\mathbb{E}\pp{\int_0^T\int_{\mathbb{R}^d}\eta(t,\xi)U(t,\xi)\phi_N(\xi)d\xi dt}.\]
\item For $I_\lambda^3$,  one uses the point-wise convergence of $\tilde j_\lambda$ to $j$ as $\lambda\rightarrow0+$, combined with the aforementioned bounds on $j_\lambda$ and the bounded support of $\phi_N$ to deduce, via Lebesgue's dominated convergence on $\Omega\times \pp{0,T}\times B_{N+1}$,
 \[\lim_{\lambda\rightarrow 0+}I_\lambda^3=\mathbb{E}\pp{\int_0^T\int_{\mathbb{R}^d}j(U(t,\xi))\phi_N(\xi)d\xi dt}.\]

\item For the remaining term, we note that there exists $\lambda_0 <1$ such that
$j_\lambda(x)\geq j_{\lambda_0}(x)$ for every $\lambda\leq \lambda_0$ (and all $x\in\mathbb{R}_+$). The quadratic contribution $\lambda X_\lambda^2$ has a null limit in $\mathbb{P}\times dt\times d\xi$-mean owing to the $\lambda$- uniform bounds on square moments. \\
 Second, the functional $x \in \mathbb{L}^2\mapsto \mathbb{E}\pp{\int_0^T\int_{\mathbb{R}^d}j_{\lambda_0}(x(t,\xi))\phi_N(\xi)d\xi dt}$ is convex and strongly (hence weakly) lower semicontinuous. This is a consequence of $j_{\lambda_0}$ being continuous as a real function and Fatou's Lemma.  As a consequence,
 \begin{align*}\underset{\lambda\rightarrow0+}{\lim\inf}\ \mathbb{E}\pp{\int_0^T\int_{\mathbb{R}^d}j_{\lambda}(X_\lambda(t,\xi))\phi_N(\xi)d\xi dt}&\geq \underset{\lambda\rightarrow0+}{\lim\inf}\ \mathbb{E}\pp{\int_0^T\int_{\mathbb{R}^d}j_{\lambda_0}(X_\lambda(t,\xi))\phi_N(\xi)d\xi dt}\\&\geq \mathbb{E}\pp{\int_0^T\int_{\mathbb{R}^d}j_{\lambda_0}(X(t,\xi))\phi_N(\xi)d\xi dt}.
 \end{align*}

To conclude, one takes the supremum over $\lambda_0>0$ and uses dominated (or monotone) convergence to conclude that \[\underset{\lambda\rightarrow0+}{\lim\inf}\ I_\lambda^2\geq \mathbb{E}\pp{\int_0^T\int_{\mathbb{R}^d}j(X(t,\xi))\phi_N(\xi)d\xi dt}.\]
\end{enumerate}
Plugging these three items into \eqref{Estim_2} and recalling that \eqref{Estim_1} holds true, we finally get 
\begin{equation}\label{Estim_3}\begin{split} &\mathbb{E}\pp{\int_0^T\int_{\mathbb{R}^d}\eta(t,\xi)X(t,\xi) d\xi dt}-\mathbb{E}\pp{\int_0^T\int_{\mathbb{R}^d}\eta(t,\xi) U(t,\xi)\phi_N(\xi)d\xi dt}\\&\geq\mathbb{E}\pp{\int_0^T\int_{\mathbb{R}^d}\pr{j(X(t,\xi))-j\pr{U(t,\xi)}}\phi_N(\xi)d\xi dt}\\
&\geq  \mathbb{E}\pp{\int_0^T\int_{\mathbb{R}^d}\Psi(U(t,\xi))\pr{X(t,\xi)-U(t,\xi)}\phi_N(\xi)d\xi dt},\end{split}\end{equation}
where we have, once again, used the convexity of $j$ and the sub-gradient property $\Psi(u)\in \partial j(u)$. The right-hand term makes sense if $\Psi(U)$ belongs to $\mathbb{L}^2\pr{\Omega\times\pp{0,T};\mathbb{L}^{2,loc}\pr{\mathbb{R}^d}}$.

For every $N$, $\pr{X+\eta}\phi_N$ provides a regular process which is in $\mathbb{L}^2\pr{\Omega\times (0,T)\times B_{N+1};\mathbb{R}}$.  Using the monotonicity of $\Psi$ on $\mathbb{R}$, we get an $\mathbb{L}^2$-monotone realization of this operator. Hence, 
we get the existence of $Z_N$ as a unique $\mathbb{L}^2\pr{\Omega\times \pr{0,T}\times B_{N+1};\mathbb{R}}$-solution to 
\begin{equation*}
Z+\Psi(Z)=\pr{X+\eta}\phi_N.\footnote{The reader is invited to note that $Z_N=J_1\pr{\pr{X+\eta}\phi_N}$ where $J_1(r)=:x$ solves $x+\ln x=r$ for all $x>0$.}
\end{equation*}

Note that for all $N\leq M$, $Z_N$ and $Z_M$ coincide on $\Omega\times \pr{0,T}\times B_N$ ($\mathbb{P}\times dt\times Leb$-a.s.) and $Z_N\leq Z_M$. 
Secondly, the reader is invited to note that the right-hand term in the equation i.e.  $\pr{X+\eta}\phi_N$ is finite almost surely (on $\Omega\times \pr{0,T}\times B_N$), and, as a consequence, the solution $Z_N$ belongs to the domain of $\Psi$, or, equivalently, $Z_N>0$,  on $\Omega\times \pr{0,T}\times B_N$ ($\mathbb{P}\times dt\times Leb$-a.s.).\\ 

For every $N\geq 1$, we define
\begin{equation*}
U_N:=Z_N\mathbf{1}_{B_N}+X\mathbf{1}_{\mathbb{R}^d\setminus B_N}
\end{equation*}
and $U:= \sup_{N} Z_N$. 
%and the limit $U$ and note that $U$ and $U_N$ coincide on $\mathbf{1}_{B_N}$ hence providing coherence to $U$. 
One easily sees that $U=Z_N>0$, $\mathbb{P}\times dt\times Leb$-a.s. when restricted on $\Omega\times \pr{0,T}\times B_N$. 
It is now clear that $\Psi\pr{U}$ satisfies the local integrability properties (on the relevant set $B_N$, $\Psi(Z_N)=\pr{X+\eta}\phi_N-Z_N$) and we are able to apply \eqref{Estim_3} with $U$.

The reader is invited to note that, on $B_{N+1}$,  $U+\Psi(U)=X+\eta$ in an a.s. way. 

Using this, the fact that $\phi_N$ is null outside $B_{N+1}$ and by rearranging \eqref{Estim_3}, we get 
for every $M\ge 1$, $M \le N+1$,
\begin{align*}
\mathbb{E}\pp{\int_0^T\int_{\mathbb{R}^d}\pr{X-U}^2(t)(\xi)\phi_N(\xi)d\xi dt}\leq \mathbb{E}\pp{\int_0^T\int_{\mathbb{R}^d}\pr{\eta X}(t)(\xi)\pr{1-\phi_N(\xi)}d\xi dt}.
\end{align*}
The left-hand term is non-decreasing in $N$. It follows that, for every $M\geq 1$, \begin{align*}
\mathbb{E}\pp{\int_0^T\int_{\mathbb{R}^d}\pr{X-U}^2(t)(\xi)\phi_M(\xi)d\xi dt}\leq \lim_{N\rightarrow\infty }\ \mathbb{E}\pp{\int_0^T\int_{\mathbb{R}^d}\pr{\eta X}(t)(\xi)\pr{1-\phi_N(\xi)}d\xi dt}=0,\end{align*} the equality being a consequence of the integrability of $\eta X$. It follows that $X=U$ and, thus, $\eta=\Psi(X)$, first $\mathbb{P}\times dt\times d\xi$-a.s. on $\Omega\times\pr{0,T}\times B_M$, then on the whole space.
By our previous argument, it follows that $X=(U=)Z_N>0$, $\mathbb{P}\times dt\times Leb$-a.s. on $\Omega\times \pr{0,T}\times B_N$ in a first step, then by allowing $N\rightarrow\infty$,  one gets
$X>0$ a.s. on $\Omega\times \pr{0,T}\times \mathbb{R}^d$.

\textbf{Step 8.} Uniqueness of the solution is a standard consequence of the monotonicity of the logarithm and we omit it.

\section{Appendix}
\subsection{Proof of Well-Posedness of Equation \ref{Eq_lambda_nu}}\label{A1}
\begin{proof}[Proof of Well-Posedness of Equation \ref{Eq_lambda_nu}]
We check the main assumptions in \cite[Page 56]{Prevot-Rockner}
\begin{enumerate}
\item \textit{Hemicontinuity} cf.  \cite[Page 56, (H1)]{Prevot-Rockner}\\
The fact that $\theta\mapsto\scal{A(u+\theta v),x}_{\pr{V^*,V}}$ is continuous for every $u,v,x\in V$ follows as in\cite[Page 71, (H1)]{Prevot-Rockner}.  Indeed,  owing to \eqref{V*V} combined with \eqref{EstimStrat3}, 
\begin{align*}
&\scal{A(u+\theta v),x}_{\pr{V^*,V}}=\scal{\pr{\bigtriangleup-\nu\mathbb{I}}\tilde\Psi_\lambda(u+\theta v),x}_{\pr{V^*,V}}+\frac{1}{2}\scal{\pr{\sigma\otimes\sigma}\pr{u+\theta v},x}_{\pr{V^*,V}}\\=&-\scal{\tilde\Psi_\lambda(u+\theta v),x}_2+\frac{1}{2}\pr{\scal{\pr{\sigma\otimes\sigma}\pr{u},x}_{\pr{V^*,V}}+\theta\scal{\pr{\sigma\otimes\sigma}\pr{v},x}_{\pr{V^*,V}}}.
\end{align*}The continuity of the second term is obvious. For the first term, one uses the linear growth of $\tilde\Psi_\lambda$ i.e.  $\tilde\Psi_\lambda(r)\leq c_\lambda \abs{r}$ (with $c_\lambda=\lambda+\frac{1}{\lambda}$), its continuity and concludes due to Lebesgue's dominated convergence.
\item \textit{Weak monotonicity} cf.  \cite[Page 56, (H2)]{Prevot-Rockner}\\
For $u,v\in V$,  one recalls \eqref{V*V}, followed by \eqref{SigSig} and \eqref{normSigmaH} (written for $\sigma(u)-\sigma(v)$ replacing $\sigma(x)$) to get
\begin{align*}
&\scal{A(u)-A(v),u-v}_{\pr{V^*,V}}+\frac{1}{2}\norm{\sigma(u)-\sigma(v)}^2_{\mathcal{L}_2\pr{Q^{\frac{1}{2}}\mathcal{H}^{-1};H^{-1}_\nu}}\\
=&\scal{\pr{\bigtriangleup-\nu\mathbb{I}}\pr{\tilde\Psi_\lambda(u)-\tilde\Psi_\lambda(v)},u-v}_{\pr{V^*,V}}\\&+\frac{1}{2}\scal{\pr{\sigma\otimes\sigma}(u-v),u-v}_{\pr{V^*,V}}+\frac{1}{2}\norm{\sigma(u)-\sigma(v)}^2_{\mathcal{L}_2\pr{Q^{\frac{1}{2}}\mathcal{H}^{-1};H^{-1}_\nu}}\\
\leq &\scal{\pr{\bigtriangleup-\nu\mathbb{I}}\pr{\tilde\Psi_\lambda(u)-\tilde\Psi_\lambda(v)},u-v}_{\pr{V^*,V}}+\frac{1}{2}\norm{\sigma\otimes\sigma}_{\mathcal{L}\pr{H_\nu^{-1};H_{\nu}^{-1}}}\norm{u-v}_{H^{-1}_\nu}^2\\&+\frac{1}{2}\norm{\sigma(u)-\sigma(v)}^2_{\mathcal{L}_2\pr{Q^{\frac{1}{2}}\mathcal{H}^{-1};H^{-1}_\nu}}\\
\leq&-\scal{\tilde\Psi_\lambda(u)-\tilde\Psi_\lambda(v),u-v}_{2}+C\norm{u-v}^2_{H^{-1}_\nu}\leq C\norm{u-v}^2_{H^{-1}_\nu},
\end{align*}where the last inequality follows from the monotonicity of $r\mapsto \Psi_\lambda(r)$, while $C$ is determined by $C_0$ (cf. \eqref{PropSigSig}, \eqref{normSigmaH} and \eqref{Const}).
\item \textit{Coercivity} cf.  \cite[Page 56, (H3)]{Prevot-Rockner}\\
Similar to the previous computations, one gets, for every $u\in V=\mathbb{L}^2\pr{\mathbb{R}^d}$,
\begin{align*}
&\scal{A(u),u}_{\pr{V^*,V}}+\frac{1}{2}\norm{\sigma(u)}^2_{\mathcal{L}_2\pr{Q^{\frac{1}{2}}\mathcal{H}^{-1};H^{-1}_\nu}}\\
=&-\scal{\tilde\Psi_\lambda(u),u}_{2}+C\norm{u}^2_{H^{-1}_\nu}
\leq -\lambda\norm{u}_2^2+C\norm{u}^2_{H^{-1}_\nu}.
\end{align*}
For the last inequality, one recalls that $\tilde\Psi_\lambda(r)=\Psi_\lambda(r)-\Psi_\lambda(0)+\lambda r$ and uses the monotonicity of $\Psi_\lambda$.
\item \textit{Boundedness} cf.  \cite[Page 56, (H4)]{Prevot-Rockner}\\
For every $u\in V$, one has\[\norm{Au}_{V^*}=\norm{\pr{\bigtriangleup-\nu\mathbb{I}}\tilde\Psi_\lambda(u)+\frac{1}{2}\pr{\sigma\otimes\sigma}(u)}_{V^*}\leq \norm{\tilde\Psi_\lambda u}_2+\frac{1}{2}\norm{\pr{\sigma\otimes\sigma}(u)}_{V^*}.\]
On the other hand, a simple trick (and \eqref{normSigSig}) gives
\begin{align*}
\norm{\pr{\sigma\otimes\sigma}u}_{V^*}=&\sup_{v\in V;\ \norm{v}_V\leq 1}\scal{v,\pr{\sigma\otimes\sigma}(u)}_{\pr{V,V^*}}=\sup_{v\in V;\ \norm{v}_V\leq 1}\scal{\pr{\sigma\otimes\sigma}(u),v}_{H^{-1}_\nu}\\
\leq &\norm{\sigma\otimes\sigma}_{\mathcal{L}\pr{H_\nu^{-1};H^{-1}_\nu}}\sup_{v\in V;\ \norm{v}_V\leq 1}\norm{u}_{H^{-1}_\nu}\norm{v}_{H^{-1}_\nu}\\
\leq &\norm{\sigma\otimes\sigma}_{\mathcal{L}\pr{H_\nu^{-1};H^{-1}_\nu}}\times C^2_{\mathbb{L}^2\subset H_\nu^{-1}}\sup_{v\in V;\ \norm{v}_V\leq 1}\norm{u}_{V}\norm{v}_{V}.
\end{align*}
As a consequence,  and owing again to \eqref{PropSigSig},
$\norm{Au}_{V^*}\leq (C+c_\lambda)\norm{u}_{2}$, where $c_\lambda$ is, again, the Lipschitz constant for $r\mapsto \Psi_\lambda(r)-\Psi_\lambda(0)+\lambda r$.
\end{enumerate}
As a consequence, the assumptions of \cite[Theorem 4.2.4]{Prevot-Rockner} are satisfied.
\end{proof}

\subsection{Proof of Proposition \ref{PropAux}}

\begin{proof}[Proof of Proposition \ref{PropAux}]
To this purpose, we recall that $r=J_\lambda(r)+\lambda\ln J_\lambda(r)\leq J_\lambda(r)+\lambda\pr{J_\lambda(r)-1}$ leading to $J_\lambda(r)\geq \frac{r+\lambda}{1+\lambda},\ \forall r>0$.  
\begin{itemize}
\item The first case is when $r\geq 3$.  In this case, $\Psi_\lambda(r)=\frac{r-J_\lambda(r)}{\lambda}\leq \frac{r}{\lambda+1}$ and $\frac{r}{\lambda+J_\lambda(r)}< \frac{r}{\lambda+1}$ and the conclusion follows.
\item Let us now turn to the case when $r<3$. We note that, $J_\lambda(r)\leq r$ if $r\geq 1$ and $J_\lambda(r)<1$ if $r< 1$ leading to $J_\lambda(r)\leq e^{r}, \ \forall r>0$.
We now consider (for $r>0$) the function \[\pp{\frac{r+\lambda}{1+\lambda},e^r}\ni x\mapsto f(x):=\ln x+\frac{r}{\lambda+x}.\]The derivative $f'(x)=\frac{1}{x}-\frac{r}{\pr{\lambda+x}^2}\geq 0$. Indeed, this is equivalent to proving that $x^2+\pr{2\lambda-r}x+\lambda^2\geq 0$ on the interval specified. The discriminant is $r^2-4\lambda r$. The conclusion is obvious if $r\leq 4\lambda$. 
Let us assume that $3>r>4\lambda$.  We claim that $\frac{r+\lambda}{1+\lambda}\geq \frac{r-2\lambda+\sqrt{r^2-4\lambda r}}{2}$. This is equivalent to proving that $r(1-\lambda)+2\lambda(2+\lambda)\geq (1+\lambda)\sqrt{r^2-4\lambda r}$ or, again, by taking squares and re-arranging the terms, with \[-4\lambda r^2+4\lambda \pr{3+\lambda}r+4\lambda^2\pr{2+\lambda}^2\geq 0,\] obvious for $r\leq 3$.\\
It follows that $f(J_\lambda(r))\leq f\pr{e^r}=r+\frac{r}{\lambda+e^r}<2r$ and our proof is complete.
\end{itemize}
\end{proof}

\subsection{Stratonovich on $\mathcal{H}^{-1}$}\label{AppStrat}
The elements $e_k\in\mathcal{H}^{-1}$ give an orthonormal basis $e_j\otimes e_k\in \mathcal{H}^{-1}\otimes \mathcal{H}^{-1}$.  One then defines the $\pr{\sigma\otimes\sigma}$ a trace class operator on $\mathcal{H}^{-1}\otimes \mathcal{H}^{-1}$ given by \[\pr{\sigma\otimes\sigma}\pr{e_j\otimes e_k}=\mu_k\delta_{j,k}\pr{e_j\otimes e_k}.\] Via Riesz' representation theorem (for bilinear forms),  $\sigma\otimes\sigma\in\mathcal{L}\pr{\mathcal{H}^{-1};\mathcal{H}^{-1}}$. Since $\mathcal{H}^{-1}\subset H^{-1}$, it follows that \begin{align}\label{EstimStrat1}
\norm{\sigma\otimes\sigma}_{\mathcal{L}\pr{\mathcal{H}^{-1}; H^{-1}}}\leq C_{\mathcal{H}^{-1}\subset H^{-1}}\norm{\sigma\otimes\sigma}_{\mathcal{L}\pr{\mathcal{H}^{-1};\mathcal{H}^{-1}}}.
\end{align}

Please note that the natural continuous embeddings  $\mathcal{H}^{-1}\subset H_\nu^{-1}\subset H_{\nu'}^{-1}\subset H^{-1}$, for $0<\nu\leq \nu'\leq 1$ yield 
\begin{align}\label{EstimStrat1'}
\norm{\sigma\otimes\sigma}_{\mathcal{L}\pr{\mathcal{H}^{-1}; H^{-1}_\nu}}\leq C_{\mathcal{H}^{-1}\subset H^{-1}}\norm{\sigma\otimes\sigma}_{\mathcal{L}\pr{\mathcal{H}^{-1};\mathcal{H}^{-1}}}.
\end{align}
Again, due to the dense embedding $\mathcal{H}^{-1}\subset H^{-1}$, $\sigma\otimes\sigma$ extends into a bounded linear operator on $H^{-1}$ whose norm preserves that of $\sigma\otimes\sigma\in\mathcal{L}\pr{\mathcal{H}^{-1};H^{-1}}$ and, owing to \eqref{EstimStrat1} and \eqref{EstimStrat1'},
\begin{align}\label{EstimStrat2}
\norm{\sigma\otimes\sigma}_{\mathcal{L}\pr{H^{-1}; H^{-1}_\nu}}\leq C_{\mathcal{H}^{-1}\subset H^{-1}}\norm{\sigma\otimes\sigma}_{\mathcal{L}\pr{\mathcal{H}^{-1};\mathcal{H}^{-1}}}.
\end{align}
Finally, since $\norm{\cdot}_{H_{\nu}^{-1}}\geq \norm{\cdot}_{H^{-1}}$,  for all $0<\nu\leq 1$, it follows that 
\begin{align}\label{EstimStrat2'}
\norm{\sigma\otimes\sigma}_{\mathcal{L}\pr{H^{-1}_\nu; H^{-1}_\nu}}\leq C_{\mathcal{H}^{-1}\subset H^{-1}}\norm{\sigma\otimes\sigma}_{\mathcal{L}\pr{\mathcal{H}^{-1};\mathcal{H}^{-1}}}.
\end{align}
Finally,  recalling that $\norm{\cdot}_{\mathbb{L}^2}\geq \norm{\cdot}_{H^{-1}}$ and by considering the Gelfand triple based on the inclusion $V:=\mathbb{L}^2\pr{\mathbb{R}^d}\subset H^{-1}\pr{\mathbb{R}^d}\subset V^*$,  $\sigma\otimes\sigma\in\mathcal{L}\pr{\mathbb{L}^2;V^*}$ and
\begin{align}\label{EstimStrat3}
\norm{\sigma\otimes\sigma}_{\mathcal{L}\pr{\mathbb{L}^2;V^*}}\leq C_{\mathcal{H}^{-1}\subset H^{-1}}\norm{\sigma\otimes\sigma}_{\mathcal{L}\pr{\mathcal{H}^{-1};\mathcal{H}^{-1}}}.
\end{align}

\section*{Acknowledgements}

\noindent I.C. was partially supported by the Normandie Regional Council (via the M2SiNum project) and by the French ANR grants ANR-18-CE46-0013 QUTE-HPC and COSS: ANR-22-CE40-0010-01.\\
R.F.  acknowledges support from JSPS KAKENHI Grant Numbers JP19KK0066, JP20K03669. \\
D.G. acknowledges support from the National Key R and D Program of China (NO. 2018YFA0703900), the NSF of P.R.China (NOs. 12031009, 11871037), NSFC-RS (No. 11661130148; NA150344), 111 Project (No. B12023).

\bibliographystyle{plain}      % Include this if you use bibtex 
\bibliography{bibbli}  
\end{document}